\newtheorem{theorem}{Theorem}
\newtheorem{lemma}{Lemma}
\newtheorem{example}{Example}
\renewcommand{\thefootnote}{\fnsymbol{footnote}}
\newcommand{\Po}[1]{\tilde{#1}}             
\newcommand\Me[1]{ {#1^*} }     
\def\R{ {\mathbb{R}}}
\newcommand\floor[1]{ \lfloor {#1} \rfloor}
\newcommand\dee[1]{ {\;\mathrm{d}#1}}
\def\lintersect{\cap}
\def\Z{ {\mathbb{Z}}}
\def\parsec{\par\noindent}
\def\med{\medskip\parsec}
\def\E{ {\mathbb{E}}}
\def\Var{ {\mathrm{Var}}}
\def\Normal{ {\mathcal{N}}}
\def\P{ {\mathcal{P}}}
\def\P{ {\frak{P}}}
\author[M. Drmota, A. Magner, and W. Szpankowski]{
	Michael Drmota\addressmark{1}\thanks{M. Drmota was supported by the Austrian Science Foundation FWF Grant No. S9604.} \and 
    Abram Magner\addressmark{2} \and 
    Wojciech Szpankowski\addressmark{3}\thanks{ A. Magner and W. 	
    Szpankowski were supported by
	NSF Center for Science of Information (CSoI) Grant CCF-0939370,
	and in addition by NSF Grants CCF-1524312, 
	NIH Grant 1U01CA198941-01, 
	and the NCN grant, grant  UMO-2013/09/B/ST6/02258.}\thanks{ 
    W. Szpankowski is also with the Faculty of Electronics, Telecommunications, and Informatics, Gda\'nsk University of Technology, Poland.}
}
\title[Asymmetric R\'enyi Problem]{Asymmetric R\'enyi Problem and PATRICIA Tries}
\address{\addressmark{1} Institute for Discrete Mathematics and Geometry, TU Wien, A-1040 Wien, Austria (email: michael.drmota@tuwien.ac.at). \\
	\addressmark{2} Coordinated Science Lab, UIUC, Champaign, IL, USA (email: anmagner@illinois.edu) \\
    \addressmark{3} Department of Computer Science, Purdue University, IN 47907, USA (email: spa@cs.purdue.edu)
}
\keywords{ R\'enyi problem, PATRICIA trie, profile, height, fillup level,
analytic combinatorics, Mellin transform, depoissonization }
\begin{document}
\maketitle


\begin{abstract}
\paragraph{Abstract: }
In 1960 R\'enyi asked for the number 
of random queries necessary to recover a hidden bijective labeling of  
$n$ distinct objects.  In each query one selects a random subset of labels and asks, 
what is the set of objects that have these labels?
We consider here an asymmetric version of
the problem in which in every query an object is chosen with probability 
$p > 1/2$ and we ignore ``inconclusive'' queries.
We study the number of queries needed to recover the labeling in 
its entirety (the \emph{height}), to recover at least one single element 
(the \emph{fillup level}), and to recover a randomly chosen element 
(the \emph{typical depth}).  
This problem exhibits several remarkable behaviors: 
the depth $D_n$ converges in probability but not almost surely and while it
satisfies the central limit theorem its local limit theorem doesn't hold;
the height $H_n$ and the fillup level $F_n$
exhibit phase transitions with respect
to $p$ in the second term.   
To obtain these results, we take a unified approach via the analysis of the 
\emph{external profile} defined at level $k$ as the number of 
elements recovered by the $k$th query. 
We first establish new precise asymptotic results for
the average and variance, and a central limit law, for the external profile in the regime
where it grows polynomially with $n$.
We then extend the external profile results to the boundaries of the central region, 
leading to the solution of our problem for the height and fillup level.
As a bonus, our analysis implies novel results for random PATRICIA tries, as 
it turns out that the problem is probabilistically equivalent to
the analysis of the height, fillup level, typical depth, and external profile of a PATRICIA trie built from $n$ independent
binary sequences generated by a biased($p$) memoryless source.
\end{abstract}

\renewcommand{\thefootnote}{\arabic{footnote}}
\setcounter{footnote}{0}

\section{Introduction}
In his lectures in the summer of 1960 at Michigan State University,
Alfred R\'enyi discussed several problems related to random sets \cite{renyi}.
Among them there was a problem regarding recovering a labeling of a set $X$ of $n$
distinct objects by asking random subset questions of the form ``which objects correspond to the labels in the (random) set $B$?''
For a given method of randomly selecting queries, R\'enyi's original problem asks for the typical behavior of the number of queries
necessary to recover the hidden labeling.

Formally, the unknown labeling of the set $X$ is a bijection $\phi$ from $X$ to a set $A$ of labels 
(necessarily with equal cardinality $n$), and a query takes the form of a subset $B \subseteq A$.  
The response to a query $B$ is $\phi^{-1}(B) \subseteq X$.  

Our contribution in this paper is a precise analysis of several parameters of R\'enyi's problem for a particular natural probabilistic model
on the query sequence.  In order to formulate this model precisely, it is convenient to first state a view of the process
that elucidates its tree-like structure.  In particular, 
a sequence of queries corresponds to a refinement of partitions of the set of objects, where two objects are in
different partition elements if they have been distinguished by some sequence of queries.
More precisely, the refinement works as follows: 
before any questions are asked, we have a trivial partition $\P_0 = X$ consisting of a single class (all objects).  Inductively,
if $\P_{j-1}$ corresponds to the partition induced by the first $j-1$ queries, then $\P_j$ is constructed from $\P_{j-1}$ by splitting
each element of $\P_{j-1}$ into at most two disjoint subsets: those objects that are contained in the preimage of the $j$th query set $B_j$ and
those that are not.  The hidden labeling is recovered precisely when 
the partition of $X$ consists only of singleton elements.  An instance of 
this process may be viewed as a rooted binary tree (which we call the 
\emph{partition refinement tree}) in which the $j$th level, for $j \geq 0$, corresponds
to the partition resulting from $j$ queries; a node in a level corresponds to an element of that 
partition.  A right child corresponds to a subset of a parent partition element that is included
in the subsequent query, and a left child corresponds to a subset that is not included.  
See Example~\ref{QuerySequenceExample} for an illustration. 
\begin{example}[Demonstration of partition refinement]
	\label{QuerySequenceExample}
    Consider an instance of the problem where $X = [5] = \{1, ..., 5\}$, 
    with labels $(d, e, a, c, b)$ respectively (so $A = \{a, b, c, d, e\}$).
    Consider the following sequence of queries:
	    \tikzstyle{level 1}=[level distance=1.0cm, sibling distance=2.5cm]
	    \tikzstyle{level 2}=[level distance=1.0cm, sibling distance=1.5cm]
	    \tikzstyle{bag} = [rectangle, minimum width=3pt,inner sep=0pt]
	    \tikzstyle{end} = [circle, minimum width=3pt,fill, inner sep=0pt]
		
    \begin{minipage}{0.5\textwidth}
    \begin{enumerate}
    	\item
        	$B_1 = \{b, d\} \mapsto \{1, 5\}$
        \item
        	$B_2 = \{a, b, d\} \mapsto \{1, 3, 5\}$,
        \item
        	$B_3 = \{a, c, d\} \mapsto \{1, 3, 4\}$,
    \end{enumerate}
    \end{minipage}
    \vspace{20pt}
    \begin{minipage}{0.5\textwidth}
	    \begin{tikzpicture}[grow=down]
	    \node[bag]{\{1, 2, 3, 4, 5\}}
	    	child{
	            node[bag]{\{2,3,4\}}
	        	child{ 
	            	node[bag]{\{2,4\}}
                    child{ 
                        node[rectangle,draw]{2}
                    }
                    child{ 
                        node[rectangle,draw]{4}
                    }
	            }
	            child{ 
	            	node[rectangle,draw]{3}
	            }
	        }
	        child{
	            node[bag]{\{1,5\}}
                child{
	                node[bag,right]{\{1, 5\}}
			        	child{ 
			            	node[rectangle,draw]{5}
			                edge from parent
			                node[above]{}
			            }
			            child{ 
			            	node[rectangle,draw]{1}
			                edge from parent
			                node[above]{}
			            }
                }
	        };
	\end{tikzpicture}
	\label{QuerySequenceExampleDiagram}
    \end{minipage}
    Each level $j\geq 0$ of the tree depicts the partition $\P_j$, where a right child node 
    corresponds to the subset of objects in the parent set which are contained in the 
    response to the $j$th query.  Singletons are only explicitly depicted in the first
    level in which they appear.
    \qed
\end{example}
In this work we consider a version of
the problem in which, in every query, each label is included independently
with probability $p > 1/2$ (the \emph{asymmetric case}) and we \emph{ignore inconclusive queries}.  In particular, if a candidate query fails to nontrivially split some element 
of the previous partition, we modify the query by deciding again independently 
whether or not to include each label of that partition element with probability $p$.  
We perform this modification until
the resulting query splits every element of the previous partition nontrivially.  
See Example~\ref{QuerySequenceIgnoreExample}.
\begin{example}[Ignoring inconclusive queries]
	\label{QuerySequenceIgnoreExample}
    Continuing Example~\ref{QuerySequenceExample}, the query $B_2$ fails to split the 
    partition element $\{1, 5\}$, so it is an example of an inconclusive query and would 
    be modified in our model to, say, $B_2' = \phi(\{1,3\})$.
    The resulting refinement of partitions is depicted as a tree here.  Note that the tree
    now does not contain non-branching paths and that $B_2$ is ignored in the final query
    sequence.
    \begin{minipage}{0.5\textwidth}
    \begin{enumerate}
    	\item
        	$B_1 = \{b, d\} \mapsto  \{1,5\}$
        \item
        	$B_2' = \{a,d\} \mapsto \{1,3\}$
        \item
        	$B_3 = \{a, c, d\} \mapsto \{1, 3, 4\}$.
    \end{enumerate}
    \end{minipage}
    \begin{minipage}{0.5\textwidth}
    	\vspace{12pt}
 	    \tikzstyle{level 1}=[level distance=1.0cm, sibling distance=2.5cm]
	    \tikzstyle{level 2}=[level distance=1.0cm, sibling distance=1.5cm]
	    \tikzstyle{bag} = [rectangle, minimum width=3pt,inner sep=0pt]
	    \tikzstyle{end} = [circle, minimum width=3pt,fill, inner sep=0pt]
 	    \begin{tikzpicture}[grow=down]
	    \node[bag]{\{1, 2, 3, 4, 5\}}
	    	child{
	            node[bag]{\{2,3,4\}}
	        	child{ 
	            	node[bag]{\{2,4\}}
                    child{ 
                    	node[rectangle,draw]{2}
                    }
                    child{ 
                    	node[rectangle,draw]{4}
                    }
	            }
	            child{ 
	            	node[rectangle,draw]{3}
	            }
	        }
	        child{
	            node[bag]{\{1,5\}}
	        	child{ 
	            	node[rectangle,draw]{5}
	                edge from parent
	                node[above]{}
	            }
	            child{ 
	            	node[rectangle,draw]{1}
	                edge from parent
	                node[above]{}
	            }
	        };
	\end{tikzpicture}   
    \end{minipage}
    \qed
\end{example}
We study three parameters of this random process: $H_n$, the number of such queries
needed to recover the entire labeling; $F_n$, the number needed before at
least one element is recovered; 
and $D_n$, the number needed to recover an element selected uniformly
at random.  Our objective is to present precise probabilistic estimates 
of these parameters and to study the distributional behavior of $D_n$.

The symmetric version (i.e., $p=1/2$) of the problem (with a variation) was discussed by Pittel 
and Rubin in \cite{pittelrubin1990}, where they analyzed the typical value of $H_n$.  
In their model, a query is constructed by deciding
whether or not to include each label from $A$ independently with probability $p=1/2$.
To make the problem interesting, they added a constraint similar to ours: namely, a
query is, as in our model, admissible if and only if it splits every nontrivial element of the current
partition.  In contrast with our model, however, Pittel and Rubin completely discard inconclusive queries (rather than modifying their inconclusive subsets as we do).  
Despite this difference, the model considered in \cite{pittelrubin1990} is
probabilistically equivalent to ours for the symmetric case.  Our primary contribution
is the analysis of the problem in the asymmetric case ($p > 1/2$), but our methods
of proof allow us to recover the results of Pittel and Rubin. 

The question asked by R\'enyi brings some surprises.  For the
symmetric model ($p=1/2$) Pittel and Rubin \cite{pittelrubin1990} were
able to prove that the number of necessary queries is with high probability
(whp) (see Theorem~\ref{HeightFillupTheorem}) 
\begin{align}
	\label{e1}
	H_n = \log_{2} n +\sqrt{2\log_{2} n} +o(\sqrt{\log n}).
\end{align}
In this paper, we re-establish this result using a different approach
\emph{and} prove that for $p > 1/2$ the number
of queries grows whp as
\begin{align}
	\label{e2}
    H_n = \log_{1/p} n +\frac{1}{2}\log_{p/q} \log n  +o(\log \log  n),
\end{align}

where $q:=1-p$.
Note a phase transition in the second term.
We show that a similar phase transition occurs in the asymptotics for $F_n$
(see Theorem~\ref{HeightFillupTheorem}):
\begin{align} 
\label{e3}
    F_n =
    \begin{cases}
        \log_{1/q} n - \log_{1/q}\log\log n + o(\log\log\log n)     &      p > q \\
        \log_{2} n - \log_2\log n + o(\log\log n)                   &      p = q = 1/2.
    \end{cases}    
\end{align}
We then prove in Theorem~\ref{DepthTheorem} some interesting probabilistic behaviors
of $D_n$.
We have $D_n/\log n  \to 1/h(p)$ (in probability)
where $h(p) := -p\log p - q\log q$, but we do not have almost sure
convergence.  Moreover, $D_n$ appropriately normalized
satisfies a central limit result, but 
not a local limit theorem due to some oscillations discussed below.

We establish these results in a novel way by considering first the \emph{external profile}
$B_{n,k}$, whose analysis was, until recently, an open problem of its own (the second 
and third authors gave a precise analysis of the external profile in an important range 
of parameters in \cite{magnerPhD2015,ms2015}, but the present paper requires nontrivial 
extensions).  
The external profile at level $k$ is
the number of bijection elements revealed by the $k$th query (one may also define the \emph{internal} profile
at level $k$ as the number of non-singleton elements of the partition immediately after the $k$th query).  Its study is motivated by
the fact that many other parameters, including all of those that we mention here, can
be written in terms of it. Indeed, $\Pr[D_n=k]=\E[B_{n,k}]/n$, 
$H_n = \max\{k : ~ B_{n,k} > 0\}$, and $F_n = \min\{k : ~ B_{n,k} > 0\} - 1$.

We now discuss our new results concerning the probabilistic  
behavior of the external profile.  
We establish in \cite{ms2015,magnerPhD2015} precise asymptotic expressions for the 	
expected value and variance of $B_{n,k}$ in the \emph{central range}, that is, 
with $k \sim \alpha\log n$, where, for any fixed $\epsilon > 0$,
$\alpha \in (1/\log(1/q) + \epsilon, 1/\log(1/p) - \epsilon)$
(the left and right endpoints of this interval are associated
with $F_n$ and $H_n$, respectively).  Specifically,
we show that both the mean and the variance
are of the same (explicit) polynomial order of growth
(with respect to $n$) 
(see Theorem~\ref{CentralRangeTheorem}).
More precisely, we show that both expected value and variance grow
for $k\sim\alpha \log n$ as
$$
H(\rho(\alpha), \log_{p/q}(p^kn)) ~  \frac{n^{\beta(\alpha)}}{\sqrt{C \log n}}
$$
where $\beta(\alpha)\leq 1$ and $\rho(\alpha)$ are 
complicated functions of $\alpha$, 
$C$ is an explicit constant, and $H(\rho, x)$ is
a function that is periodic in $x$. 
The oscillations come from infinitely many
regularly spaced saddle points that we observe when inverting the Mellin
transform of the Poisson generating function of $\E[B_{n,k}]$.  
Finally, we prove a central limit theorem;
that is,
${(B_{n,k} - \E[B_{n,k}])}/{\sqrt{\Var[B_{n,k}]}} \to \Normal(0,1) $
where $\Normal(0,1)$ represents the standard normal distribution.  

In the present paper,
we exploit the expected value analysis of $B_{n,k}$ in the central range to give precise
distributional information about $D_n$ via the identity
$\Pr[D_n = k] = \E[B_{n,k}]/n$.  Note that the oscillations in $\E[B_{n,k}]$
are the source of the peculiar behavior of $D_n$.

In order to establish the most interesting results claimed in the present paper
for $H_n$ and $F_n$, the analysis sketched above does not suffice: we need to estimate the
mean and the variance of the external profile \emph{beyond} the range
$\alpha \in (1/\log(1/q) + \epsilon, 1/\log(1/p) - \epsilon)$; in particular,
for $F_n$ and $H_n$ we need expansions at the left and right side, respectively, of this range.
This, it turns out, requires a novel approach and analysis, as discussed in detail in our forthcoming
journal paper \cite{drmotamagnerszpa2016},		
leading to the announced results on the R\'enyi problem in (\ref{e2}) and (\ref{e3}).  

Having described most of our main results, we mention an important equivalence
pointed out by Pittel and Rubin \cite{pittelrubin1990}.
They observed that their version of the R\'enyi process
resembles the construction of a digital tree
known as a PATRICIA trie\footnote{We recall that a PATRICIA trie is a trie in
which non-branching paths are \emph{compressed}; that is, there are no unary paths.}
\cite{knuth1998acp,szpa2001Book}. In fact,
the authors of \cite{pittelrubin1990} show that $H_n$ is probabilistically
equivalent to the height (longest path) of a PATRICIA trie built from
$n$ binary sequences generated independently by a memoryless source with bias $p=1/2$
(that is, with a ``1'' generated with probability $p$; this is often called
the \emph{Bernoulli model with bias $p$}); the equivalence is true more generally, for $p \geq 1/2$.
It is easy to see that
$F_n$ is equivalent to the fillup level (depth of the deepest full level),
$D_n$ to the typical depth (depth of a randomly chosen leaf), and $B_{n,k}$
to the external profile of the tree (the number of leaves at level $k$; the internal
profile at level $k$ is similarly defined as the number of non-leaf nodes at that 
level).  We spell out this equivalence in the following simple claim.  
\begin{lemma}[Equivalence of parameters of the R\'enyi problem with those of PATRICIA tries]
	\label{EquivalenceLemma}
    Any parameter (in particular, $H_n, F_n, D_n$, and $B_{n,k}$) of the R\'enyi process with bias $p$ 
    that is a function of the partition refinement
    tree is equal in distribution to the same function of a random PATRICIA trie generated
    by $n$ independent infinite binary strings from a memoryless source with bias $p \geq 1/2$.
\end{lemma}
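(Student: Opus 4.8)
The plan is to show that the partition refinement tree of the R\'enyi process with bias $p$ and a random PATRICIA trie built from $n$ independent Bernoulli($p$) strings are identically distributed as rooted, plane, full binary trees with $n$ leaves; the lemma then follows at once, since every parameter it mentions --- indeed any parameter that is a function of the tree --- is a measurable function of this object. I would prove the distributional identity by induction on $n$, the crucial structural fact being that in \emph{both} models the $n$ items at the root are split into a left block $X_0$ and a right block $X_1$, after which the two subtrees are conditionally independent copies of the same process run on $X_0$ and on $X_1$.

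The recursive law of the PATRICIA trie is standard. If $n=1$ it is a single leaf; if $n\ge 2$, let $J$ be the first coordinate at which the $n$ strings are not all equal --- almost surely finite for $p\in(0,1)$, since distinct Bernoulli($p$) strings differ somewhere and there are only finitely many pairs --- place the strings whose $J$th bit is $0$ into $X_0$ and those whose $J$th bit is $1$ into $X_1$, and recurse on each block using coordinates $>J$. As the coordinate vectors are i.i.d.\ across coordinates, the $J$th coordinate vector conditioned on not being constant is distributed as $n$ i.i.d.\ Bernoulli($p$) bits conditioned on not being constant; and since coordinates $>J$ are independent of coordinates $\le J$, conditioned on $(X_0,X_1)$ the bits of the $X_0$-strings beyond position $J$ are i.i.d.\ Bernoulli($p$) and independent of those of the $X_1$-strings. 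Hence, conditionally on $(X_0,X_1)$, the left and right subtrees are independent PATRICIA tries on $|X_0|$ and $|X_1|$ fresh strings, respectively.

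Establishing the matching recursive law for the partition refinement tree is the step I expect to be the main obstacle, since it requires unwinding the ``modify inconclusive queries'' rule. Because a raw query assigns i.i.d.\ Bernoulli($p$) bits to the labels and the re-randomization acts \emph{separately} on each offending (non-singleton) class until that class is split --- re-randomizing one class leaving the others untouched, and singletons imposing no constraint --- the admissible query that results assigns to the labels of each non-singleton class $C$ of the current partition an i.i.d.\ Bernoulli($p$) vector conditioned on not being constant, independently across classes (equivalently, it is the raw i.i.d.\ query conditioned on the product event ``every non-singleton class is split''). Applied to $\mathcal{P}_0=X$, the first query splits $X$ into $X_0$ (bit $0$, i.e.\ label \emph{not} in $B_1$, which corresponds to the left child) and $X_1$ (bit $1$, right child), with $(X_0,X_1)$ distributed as $n$ i.i.d.\ Bernoulli($p$) bits conditioned on not being constant --- exactly the PATRICIA split law under the same left/right convention. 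Moreover, at every later level the current partition refines $\{X_0,X_1\}$, so the admissibility event is a product over $X_0$ versus $X_1$; conditioned on the first split, the restrictions of the query sequence to the labels of $X_0$ and of $X_1$ are therefore independent, and each is itself a R\'enyi query sequence on its block. Thus, conditionally on $(X_0,X_1)$, the two subtrees are independent partition refinement trees on $|X_0|$ and $|X_1|$ items.

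Comparing the two descriptions: the base cases agree, the laws of the root split $(X_0,X_1)$ agree, and in both models the two subtrees are conditionally independent copies of the same process on the blocks. Induction on $n$ then yields equality in distribution of the two random trees, hence of any function of them, which is the claim. The only delicate points are the reduction of the inconclusive-query rule to the clean product-form conditional law and the check that the admissibility conditioning never, at any level, couples the two sides of the first split; the remaining induction is routine and I would not carry it out in detail.
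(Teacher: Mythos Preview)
Your proof is correct, but it takes a genuinely different route from the paper's. The paper argues by an explicit \emph{coupling}: it starts with the $n$ i.i.d.\ Bernoulli($p$) strings $S_1,\dots,S_n$ that build the PATRICIA trie, and then \emph{reads off} the R\'enyi queries from these same strings (maintaining a pointer $k_j$ into each $S_j$ and consuming one bit each time label $j$ is queried). Under this coupling the partition refinement tree and the PATRICIA trie are deterministically isomorphic, so equality in distribution is immediate.

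Your argument instead shows that both random trees satisfy the \emph{same recursive distributional equation}: identical law for the root split (i.i.d.\ Bernoulli($p$) bits conditioned to be non-constant), and conditional independence of the two subtrees as fresh copies of the process on the blocks. Uniqueness then follows by induction on $n$. The only substantive work, as you correctly flag, is verifying that the per-class re-randomization rule makes the admissible query restricted to each current block independent across blocks and distributed as a fresh R\'enyi query there; once that is in hand the rest is routine. The paper's coupling is shorter and yields the stronger conclusion of an explicit joint realization (useful if one later wants almost-sure statements relating the two models), while your recursive characterization makes the common branching structure more transparent and is the natural starting point for deriving the recurrence~(\ref{muRecurrence}) for $\mu_{n,k}$.
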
    
\begin{proof}
    In a nutshell, we
    couple a random PATRICIA trie and the sequence of queries from the R\'enyi process
    by constructing both from the same sequence of binary strings from a memoryless source.
    We do this in such a way that the resulting PATRICIA trie and the partition refinement
    tree are isomorphic with probability $1$, so that parameters defined in terms of
    either tree structure are equal in distribution. 
    
    More precisely, we start with $n$ independent infinite binary strings $S_1, ..., S_n$ 
    generated according to a memoryless source with bias $p$, where each string corresponds to a
    unique element of the set of labels (for simplicity, we assume that $A = [n]$, and
    $S_j$ corresponds to $j$, for $j \in [n]$).  These induce a PATRICIA trie $T$,
    and our goal is to show that we can simulate a R\'enyi process using these strings,
    such that the corresponding tree $T_R$ is isomorphic to $T$ as a rooted plane--
    oriented tree (see Example~\ref{QuerySequenceIgnoreExample}).  The basic idea is as follows: we maintain for each string $S_j$ an
    index $k_j$, initially set to $1$.  Whenever the R\'enyi process demands
    that we make a decision about whether or not to include label $j$ in a query,
    we include it if and only if $S_{j,k_j} = 1$, and then increment $k_j$ by $1$.
    
    Clearly, this scheme induces the correct distribution on queries.  Furthermore,
    the resulting partition refinement tree (ignoring inconclusive queries) is easily 
    seen to be isomorphic to $T$.
    Since the trees are isomorphic, the parameters of interest are equal in each case.
\end{proof}
Thus, our results on these parameters for the R\'enyi problem directly lead
to novel results on PATRICIA tries, and vice versa.
In addition to their use as data structures, PATRICIA tries also arise as 
combinatorial structures which capture the behavior of various processes 
of interest in computer science and information theory
(e.g., in leader election processes without trivial splits 
\cite{jansonszpa1996} and in the solution to R\'{e}nyi's problem which we study here
\cite{pittelrubin1990, devroye1992}).

Similarly, the version of the R\'enyi problem that allows inconclusive queries corresponds
to results on tries built on $n$ binary strings from a memoryless source.  We thus discuss
them in the literature survey below.


Now we briefly review known facts about PATRICIA tries and other digital trees when built over
$n$ independent strings generated by a memoryless source.
Profiles of tries in both the asymmetric and symmetric cases were studied 
extensively in \cite{park2008}.  The expected profiles
of digital search trees in both cases were analyzed in \cite{drmotaszpa2011}, and
the variance for the asymmetric case was treated
in \cite{kazemi2011}.  Some aspects of trie and PATRICIA trie profiles 
(in particular, the concentration of their distributions) were
studied using probabilistic methods in \cite{devroye2004, devroye2002}.
The depth in PATRICIA for the symmetric model was analyzed in 
\cite{devroye1992,knuth1998acp} while for the asymmetric model in 
\cite{szpa1990}. The leading asymptotics for the PATRICIA height
for the symmetric Bernoulli model was first analyzed by Pittel   
\cite{Pittel85} (see also \cite{szpa2001Book} for suffix trees).
The two-term expression for the height of PATRICIA for the symmetric model 
was first presented in \cite{pittelrubin1990} as discussed above
(see also \cite{devroye1992}).  Finally, in \cite{magnerPhD2015,ms2015}, the second two 
authors of the present paper presented a precise analysis of the external profile (including
its mean, variance, and limiting distribution) in the asymmetric case, for the range
in which the profile grows polynomially.  The present work relies on this previous analysis,
but the analyses for $H_n$ and $F_n$ involve a significant extension, since they rely on 
precise asymptotics for the external profile outside this central range.

Regarding methodology, the basic framework (which we use here) for analysis of digital tree recurrences by 
applying the Poisson transform to derive a functional equation, converting this to an algebraic
equation using the Mellin transform, and then inverting using the saddle point method/singularity
analysis followed by depoissonization, was worked out in \cite{drmotaszpa2011} and followed in \cite{park2008}.
While this basic chain is common, the challenges of applying it vary dramatically between the different
digital trees, and this is the case here.  As we discuss later (see (\ref{pg1}) and the surrounding text), this variation starts with the quite different
forms of the Poisson functional equations, which lead to unique analytic challenges.  

The plan for the paper is as follows. In the next section we formulate
more precisely our problem and present our main results regarding the
external profile, height, fillup level, and depth. Sketches of proofs are
provided in the last section (the full proofs are provided in the journal
version of this paper).  
 
\section{Main Results}
\label{MainResults}

In this section, we formulate precisely R\'enyi's problem and 
present our main results.  
Our goal is to provide precise asymptotics for three natural parameters 
of the R\'enyi problem on $n$ objects with each label in a given 
query being included with probability $p \geq 1/2$:
the number $F_n$ of queries needed to identify at least one single element of 
the bijection, the number $H_n$ needed to recover the bijection 
in its entirety, and the number $D_n$ needed to recover an 
element of the bijection chosen uniformly at random from the $n$ objects.  
If one wishes to determine the label for a particular object, 
these quantities correspond to the best, worst, and average case performance, 
respectively, of the random subset strategy proposed by R\'enyi.  
We call these parameters, the fillup level $F_n$, the height $H_n$, and
the depth $D_n$, respectively (these names come from the corresponding quantities
in random digital trees).
One more parameter is relevant: we can present a unified analysis of 
our main three parameters $F_n, H_n,$ and $D_n$ via the \emph{external profile} 
$B_{n,k}$, which is the number of elements of the bijection on $n$ items 
identified by the $k$th query.

Our analysis reveals several remarkable behaviors:  
the depth $D_n$ converges in probability but not almost surely and while it
satisfies the central limit theorem its local limit theorem doesn't hold.
Perhaps most interestingly, the height $H_n$ and the fillup level $F_n$
exhibit phase transitions with respect 
to $p$ in the second term.  

To begin, we recall the relations of $F_n$, $H_n$, and $D_n$ to $B_{n,k}$:
\begin{align*}
    F_n = \min\{k : ~ B_{n,k} > 0\} - 1 &&
    H_n = \max\{k : ~ B_{n,k} > 0\} &&
    \Pr[D_n = k] = \E[B_{n,k}]/n.
\end{align*}
Using the first and second moment methods, we can then obtain upper and lower bounds
on $H_n$ and $F_n$ in terms of the moments of $B_{n,k}$:
\begin{eqnarray*}
    \Pr[H_n>k] \leq \sum_{j>k} \E[B_{n,j}],  \ \ \  \ \ 
    \Pr[H_n<k] \leq \frac{\Var[B_{n,k}]}{\E[B_{n,k}]^2},
\end{eqnarray*}
and
\begin{eqnarray*}
    \Pr[F_n > k] \leq \frac{\Var[B_{n,k}]}{\E[B_{n,k}]^2}, &&
    \Pr[F_n < k] \leq \E[B_{n,k}].
\end{eqnarray*}
The analysis of the distribution of $D_n$ reduces simply to that of $\E[B_{n,k}]$.

In the next section, we show that the fillup level $F_n$ and 
the height $H_n$ have the following precise asymptotic expansions.
Both exhibit a phase transition with respect to $p$ in the second term.
A complete proof can be found in our journal version of this paper \cite{drmotamagnerszpa2016}.

\begin{theorem}[Asymptotics for $F_n$ and $H_n$]
    \label{HeightFillupTheorem}
    With high probability,
    \begin{align}
        H_n =
        \begin{cases}
                \log_{1/p} n + \frac{1}{2}\log_{p/q}\log n + o(\log\log n) &
                p > q \\
                \log_{2} n + \sqrt{2\log_2 n} + o(\sqrt{\log n}) &
                p = q
        \end{cases}    
    \end{align}
    and
    \begin{align}
        F_n =
        \begin{cases}
            \log_{1/q} n - \log_{1/q}\log\log n + o(\log\log\log n)  &
            p > q \\
            \log_{2} n - \log_2\log n + o(\log\log n) &
            p = q 
        \end{cases}    
    \end{align}
for large $n$.
\end{theorem}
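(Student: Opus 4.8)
The plan is to derive both expansions from sharp estimates of the first two moments of the external profile $B_{n,k}$ at the two ends of the central range, inserted into the first- and second-moment inequalities recorded above. For the height it suffices, writing $k_H$ for the claimed main term ($k_H=\log_{1/p}n+\tfrac{1}{2}\log_{p/q}\log n$ when $p>q$, and $k_H=\log_2 n+\sqrt{2\log_2 n}$ when $p=q$), to produce two sequences $k_H^{\pm}=k_H\pm o(\log\log n)$ (respectively $\pm o(\sqrt{\log n})$) with $\sum_{j>k_H^{+}}\E[B_{n,j}]\to 0$, which forces $H_n\le k_H^{+}$ whp, and $\Var[B_{n,k_H^{-}}]/\E[B_{n,k_H^{-}}]^2\to 0$, which forces $H_n\ge k_H^{-}$ whp. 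Symmetrically, $F_n$ is pinned down at the \emph{lower} end of the range: $\Pr[F_n<k]\le\E[B_{n,k}]$ yields $F_n\ge k_F^{-}$ once $\E[B_{n,k_F^{-}}]\to 0$, while $\Pr[F_n>k]\le\Var[B_{n,k}]/\E[B_{n,k}]^2$ yields $F_n\le k_F^{+}$ once the normalized variance vanishes at $k_F^{+}$, with $k_F^{\pm}$ straddling $\log_{1/q}n-\log_{1/q}\log\log n$ (respectively $\log_2 n-\log_2\log n$). Thus the whole theorem reduces to (i) locating precisely the level at which $\E[B_{n,k}]$ crosses from $\omega(1)$ to $o(1)$, at each end, and (ii) showing $\Var[B_{n,k}]=O(\E[B_{n,k}]^2)$ just inside that level.

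For the moments I would follow the Poisson--Mellin--saddle-point--depoissonization chain already used in the central range. The moments of $B_{n,k}$ satisfy linear recurrences in $n$ coming from the splitting at the root; Poissonizing in $n$ turns these into functional equations relating $\tilde B_k(z)$ (and the Poissonized second moment) to their values at $pz$ and $qz$, together with exponential correction terms encoding PATRICIA's suppression of unary branchings (the conditioning on a nontrivial split). The Mellin transform in $z$ converts the level recurrence into an explicit finite product over the $k$ levels for $B_k^{*}(s)$, and one recovers $\tilde B_k(z)$ — and then $\E[B_{n,k}]$ by analytic depoissonization — via a saddle-point inversion. In the central range (Theorem~\ref{CentralRangeTheorem}) a single dominant real saddle produces the polynomial order $n^{\beta(\alpha)}$, the periodic factor coming from the infinitely many equally spaced complex saddles. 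The new ingredient is to push this inversion to $\alpha\to 1/\log(1/p)$ (for $H_n$) and $\alpha\to 1/\log(1/q)$ (for $F_n$): there the dominant saddle migrates toward the boundary of the fundamental strip, where $B_k^{*}(s)$ acquires poles from the level product, the polynomial factor degenerates, and one must track the competition between it and the subexponential corrections — it is exactly this competition that generates the $\log\log n$-order second term in the asymmetric case. Where $\E[B_{n,k}]$ has already dropped to $o(1)$, ordinary depoissonization is unavailable, so one applies the combinatorial first-moment inequality directly to $\E[B_{n,k}]$, or a Jacquet--Szpankowski increasing-domains argument, to transfer from the Poisson model to fixed $n$.

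The phase transition then emerges transparently from the structure of the boundary analysis. For $F_n$, $\E[B_{n,k}]$ at a shallow level $k$ is governed by root-to-node paths of probability $\approx 1/n$, which at shallow depths concentrate near the minority-symbol spine; this gives the leading $\log_{1/q}n$, and a slowly varying correction supplies the $-\log_{1/q}\log\log n$ refinement. When $p=q=1/2$ the quantity $p^{-s}+q^{-s}$ degenerates, the simple real saddle coalesces with its neighbours, and the edge behaviour is instead governed by an extreme-value/large-deviations mechanism along near-balanced root-to-node paths; this is why the second-order term sits on a different scale — $\sqrt{\log n}$ rather than $\log\log n$ for $H_n$, and $\log\log n$ rather than $\log\log\log n$ for $F_n$ — and it is also the mechanism by which we recover the Pittel--Rubin expansion \cite{pittelrubin1990}. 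To close the argument one still needs $\Var[B_{n,k}]=O(\E[B_{n,k}]^2)$ at $k_H^{-}$ and $k_F^{+}$; I would get this by running the same Mellin/saddle machinery on the Poissonized second moment and the pairwise covariance terms, extending the central-range variance estimates of \cite{ms2015,magnerPhD2015} to the boundary.

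The hard part will be the non-uniformity of the saddle-point inversion at the edges of the central range. In the interior the real saddle is well separated, but as $k$ approaches $\log_{1/p}n$ (respectively $\log_{1/q}n$) it collides with a pole of $B_k^{*}(s)$ coming from the level product, so the classical saddle-point estimate breaks down and must be replaced by a coalescing saddle/pole analysis whose error terms have to be controlled uniformly over the $\log\log n$-wide window that fixes the second-order term. Compounding this are the already-mentioned delicacies — near the height $\E[B_{n,k}]$ is subconstant, so the Poisson-to-fixed-$n$ transfer cannot use standard depoissonization — and the fact that the $p=q$ case is a genuinely separate computation rather than a limit of the asymmetric one. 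Making the two-sided estimates match to within $o(\log\log n)$ (respectively $o(\sqrt{\log n})$ and $o(\log\log\log n)$) is where essentially all the technical effort lies.
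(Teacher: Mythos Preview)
Your high-level framework---first/second moment bounds on $H_n$ and $F_n$ via the external profile, analyzed through the Poisson--Mellin--depoissonization chain---matches the paper's. But two structural points diverge from the actual analysis and would cause trouble if carried out as you describe.

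First, the Mellin transform $G_k^*(s)$ is not a clean finite product over levels. Because of the PATRICIA correction term $\Po{W}_{k,G}(z)$ in the Poisson functional equation (the term encoding rejection of non-splitting queries), one obtains $G_k^*(s) = \Gamma(s+1)\,A_k(s)\,T(s)^k$, where $A_k(s)$ is the infinite double series (\ref{A_kInitialDef}) whose coefficients involve the unknown quantities $\mu_{m,j}-\mu_{m,j-1}$ for all $m,j$. The poles of $G_k^*(s)$ come from $\Gamma$, not from any level product; $A_k(s)$ is entire and in fact cancels the $\Gamma$-poles at $-1,\dots,-k$. So there is no saddle--pole coalescence of the kind you describe, and the boundary mechanism is different.

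Second, and more importantly, the boundary analysis hinges on an ingredient you do not mention: a bootstrap estimate for $\mu_{m,j}$ when $m-j$ is small (Lemma~\ref{KnesslEstimate}). To bound the inverse Mellin integrand near $k\sim\log_{1/p}n$, the paper expands $A_k(s)T(s)^k$ as the sum (\ref{J_kFormula}) over $j$, with the inner terms controlled via Lemma~\ref{KnesslEstimate}; each $j$-term is then of order $p^{\nu_j(n,s)}$ for an explicit quadratic $\nu_j$. One optimizes over $j$ and then over $\rho=\Re(s)$. In the symmetric case $\log_{1/p}(1+(p/q)^s)\equiv 1$, the $s$-optimization is elementary and yields $\psi(n)\sim\sqrt{2\log_2 n}$. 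In the asymmetric case this factor depends on $s$, and the optimality equation reduces to a Lambert-$W$ equation whose solution gives $\rho\sim-\log_{p/q}\log n$ and hence $\psi(n)\sim\tfrac12\log_{p/q}\log n$. That is the concrete source of the phase transition---not a degeneracy of the saddle geometry, but the presence or absence of $s$-dependence in $\log_{1/p}(1+(p/q)^s)$. The $F_n$ side is analogous, with $\rho\to+\infty$ and Stirling asymptotics for $\Gamma$ entering. Your proposal would need to incorporate the $A_k(s)$ structure and the Lemma~\ref{KnesslEstimate} input to go through.
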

While the behavior of the fillup level $F_n$ could be anticipated 
\cite{pittel86}
(by comparing it to the corresponding result in the version of R\'enyi's problem 
allowing inconclusive queries), the behavior of the height $H_n$ is rather
more unusual.  It is difficult to compare the height result to the analogous quantity for
tries or digital search trees, because only the first term is given for 
$p > 1/2$ in the literature: for tries, it is $\frac{2}{\log(1/(p^2 + q^2))} \log n$,
while for digital search trees it is $\log_{1/p} n$, as in PATRICIA tries.

Focusing on the second term of each expression given in the theorem, this result
says that the deviation of the typical height from $\log_{1/p} n$ is asymptotically
larger when $p=1/2$ than when $p > 1/2$.  That is, the height of the tallest fringe
subtree (i.e., a subtree rooted near $\log_{1/p} n$) is asymptotically larger in
the symmetric case.  A complete explanation of this phenomenon would likely require
consideration of the number of such subtrees (i.e., the internal profile at level
$\log_{1/p} n$) and the number of strings participating
in each of them.  In the language of the R\'enyi problem, this latter parameter is
the number of objects that remain unidentified after approximately $\log_{1/p} n$ queries.

Moving to the number of questions $D_n$ needed to identify a random 
element of the bijection, we have the following theorem (note that 
due to the evolution process
of the random PATRICIA trie, all random variables can be defined on the same
probability space).  
\begin{theorem}[Asymptotics and distributional behavior of $D_n$]
    \label{DepthTheorem}
    For $p > 1/2$, the normalized depth $D_n/\log n$ converges in probability to $1/h(p)$,
    where $h(p) := -p\log p - q\log q$ is the Bernoulli entropy function, but not
    almost surely. In fact,  
    $$
        \liminf_{n\to\infty} D_n/\log n = 1/\log(1/q) \ \ \ \ \ \mbox{(a.s)} \ \ \ \ 
        \limsup_{n\to\infty} D_n/\log n = 1/\log(1/p).
    $$
    Furthermore, $D_n$ satisfies a central limit theorem; that is, 
    $(D_n - \E[D_n])/\sqrt{\Var[D_n]} \to \Normal(0, 1)$,
    where $\E[D_n] \sim \frac{1}{h(p)}\log n$ and $\Var[D_n] \sim c\log n$ where $c$
    is an explicit constant. 
    A \emph{local} limit theorem does not hold:
    for $x=O(1)$ and $k = \frac{1}{h} (\log n + x\sqrt{\kappa_*(-1)\log n/h})$,
    where $\kappa_*(-1)$ is some explicit constant and $h = h(p)$, we obtain
    $$
        \Pr\left[D_n = k\right]  
        \sim  H\left(-1;\log_{p/q} p^kn \right)
        \frac{e^{-x^2/2}}{\sqrt{2\pi C \log n}} 
    $$
    for an oscillating function
    $H(-1;\log_{p/q} p^kn )$ (see Figure~\ref{HFigure}) defined in 
    Theorem~\ref{CentralRangeTheorem} below and an explicitly known constant $C$.
\end{theorem}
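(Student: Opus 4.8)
The plan is to derive every claim from the external‐profile estimates of Theorem~\ref{CentralRangeTheorem} together with the identities $\Pr[D_n=k]=\E[B_{n,k}]/n$, $H_n=\max\{k:B_{n,k}>0\}$, and $F_n=\min\{k:B_{n,k}>0\}-1$, which hold on a common probability space by Lemma~\ref{EquivalenceLemma}. First I would treat convergence in probability of $D_n/\log n$: writing $k=\alpha\log n$, the central-range asymptotics give $\E[B_{n,k}]=H(\rho(\alpha),\cdot)\,n^{\beta(\alpha)}/\sqrt{C\log n}$ with $\beta(\alpha)<1$ strictly except at the single point $\alpha=1/h(p)$, where $\beta=1$. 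Hence $\Pr[D_n=k]=\E[B_{n,k}]/n$ is exponentially small in $\log n$ unless $\alpha$ is within $o(1/\log n)$-scale of $1/h(p)$; summing over the $O(\log n)$ relevant values of $k$ and using $\beta(1/h)=1$ with $\beta'(1/h)=0$ (a second-order maximum) shows $\sum_{|k-\log n/h|\le\epsilon\log n}\Pr[D_n=k]\to1$. This gives $D_n/\log n\to1/h(p)$ in probability.

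Next, for the almost-sure $\liminf$ and $\limsup$ I would invoke the corresponding facts for $H_n$ and $F_n$ together with monotonicity/interlacing along the PATRICIA evolution. Since $F_n\le D_n\le H_n$ pathwise (a random leaf lies between the shallowest and deepest leaf), Theorem~\ref{HeightFillupTheorem} gives $H_n/\log n\to1/\log(1/p)$ and $F_n/\log n\to1/\log(1/q)$ whp; upgrading these to almost-sure statements along the natural coupling (using Borel–Cantelli with the first/second moment bounds $\Pr[H_n>k]\le\sum_{j>k}\E[B_{n,j}]$ and $\Pr[F_n<k]\le\E[B_{n,k}]$, which are summable along a subsequence, then filling gaps by monotonicity of $H_n$ and of $F_n$ in $n$) yields the claimed $\liminf$ and $\limsup$ for $D_n/\log n$. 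The point is that, although $D_n/\log n$ concentrates at $1/h(p)$ for each fixed large $n$, along the trajectory the random leaf is infinitely often trapped in an atypically deep (resp.\ shallow) fringe subtree, forcing the extreme values $1/\log(1/p)$ and $1/\log(1/q)$.

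For the central limit theorem, I would use $\Pr[D_n=k]=\E[B_{n,k}]/n$ and the uniform (in the central range) profile asymptotics. Setting $k=\frac1h(\log n+x\sqrt{\kappa_*(-1)\log n/h})$ and expanding $\beta(\alpha)$ to second order around $\alpha=1/h$ — where $\beta(1/h)=1$, $\beta'(1/h)=0$, and $\beta''(1/h)<0$ is an explicit constant tied to $\kappa_*(-1)$ — produces $n^{\beta(\alpha)-1}\sim e^{-x^2/2}$ after the scaling is chosen to diagonalize the quadratic form; dividing by the $\sqrt{C\log n}$ from the saddle-point normalization gives the stated local estimate $\Pr[D_n=k]\sim H(-1;\log_{p/q}p^kn)\,e^{-x^2/2}/\sqrt{2\pi C\log n}$. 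Summing this over $k$ in an interval and noting that the oscillating factor $H(-1;\cdot)$ is bounded and averages to a positive constant over its period (the period in $k$ being $O(1/\log n)$ after rescaling, so that a Riemann-sum argument replaces $H$ by its mean) yields $\Pr[(D_n-\E[D_n])/\sqrt{\Var[D_n]}\le x]\to\Phi(x)$, with $\E[D_n]\sim\log n/h$ and $\Var[D_n]\sim c\log n$ identified from the same expansion. The failure of the local limit theorem is then immediate: the right-hand side genuinely oscillates with $k$ through $H(-1;\log_{p/q}p^kn)$ and does not converge to the Gaussian density $e^{-x^2/2}/\sqrt{2\pi\Var[D_n]}$; one exhibits two sequences $k_n$ realizing distinct values of the periodic function.

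The main obstacle I anticipate is not any single identity but the need for the profile estimates of Theorem~\ref{CentralRangeTheorem} to hold \emph{uniformly} in $k$ across a window of width $\Theta(\sqrt{\log n})$ around $\log n/h$, with explicit control of the oscillating factor $H(\rho,\cdot)$; the saddle-point analysis underlying Theorem~\ref{CentralRangeTheorem} must be shown to be locally uniform so that the second-order Taylor expansion of $\beta(\alpha)$ can be substituted under the summation, and the depoissonization error must be $o$ of the main term throughout that window. Handling the almost-sure statements also requires care: the whp results for $H_n$ and $F_n$ must be converted to almost-sure limits, which forces summability of the tail bounds along a sufficiently dense subsequence of $n$ and a monotonicity argument to interpolate — this is where the common-probability-space construction of Lemma~\ref{EquivalenceLemma} is essential.
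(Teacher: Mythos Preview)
Your approach to convergence in probability, to the local asymptotics of $\Pr[D_n=k]$, and to upgrading the whp limits of $F_n$ and $H_n$ to almost-sure limits via Borel--Cantelli on a subsequence plus monotonicity is essentially the same as the paper's. Your CLT derivation by summing the local estimate and averaging out the periodic factor $H(-1;\cdot)$ is a reasonable elaboration of what the paper only cites from \cite{rais1993}.

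There is, however, a genuine gap in your treatment of the almost-sure $\liminf$ and $\limsup$. The sandwich $F_n < D_n \le H_n$, together with the almost-sure limits $F_n/\log n \to 1/\log(1/q)$ and $H_n/\log n \to 1/\log(1/p)$, yields only one direction of each equality: $\liminf_n D_n/\log n \ge 1/\log(1/q)$ and $\limsup_n D_n/\log n \le 1/\log(1/p)$. To obtain the reverse inequalities you must show that $D_n/\log n$ actually visits neighborhoods of the two endpoints infinitely often, and your sentence about the random leaf being ``infinitely often trapped in an atypically deep (resp.\ shallow) fringe subtree'' asserts exactly this without supplying a mechanism. The paper fills the gap as follows: set $A'_n = \{D_n = H_n\}$ and $A_n = \{D_n = F_n+1\}$. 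Since there is always at least one leaf at depth $H_n$ (resp.\ $F_n+1$) and $D_n$ is a uniform choice among $n$ leaves, one has $\Pr[A_n], \Pr[A'_n] \ge 1/n$; the uniform choices are independent across $n$, so $\sum_n \Pr[A_n] = \sum_n \Pr[A'_n] = \infty$ and the second Borel--Cantelli lemma gives that both sequences of events occur infinitely often almost surely. Combined with the a.s.\ convergence of $F_n/\log n$ and $H_n/\log n$, this forces $\liminf_n D_n/\log n \le 1/\log(1/q)$ and $\limsup_n D_n/\log n \ge 1/\log(1/p)$. Without this step, the sandwich alone is entirely consistent with $D_n/\log n \to 1/h(p)$ a.s., so your argument as written does not establish the failure of almost-sure convergence.
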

\begin{figure}[h]
    \begin{center}
        \begin{subfigure}[b]{0.375\textwidth}
\includegraphics[width=\textwidth]{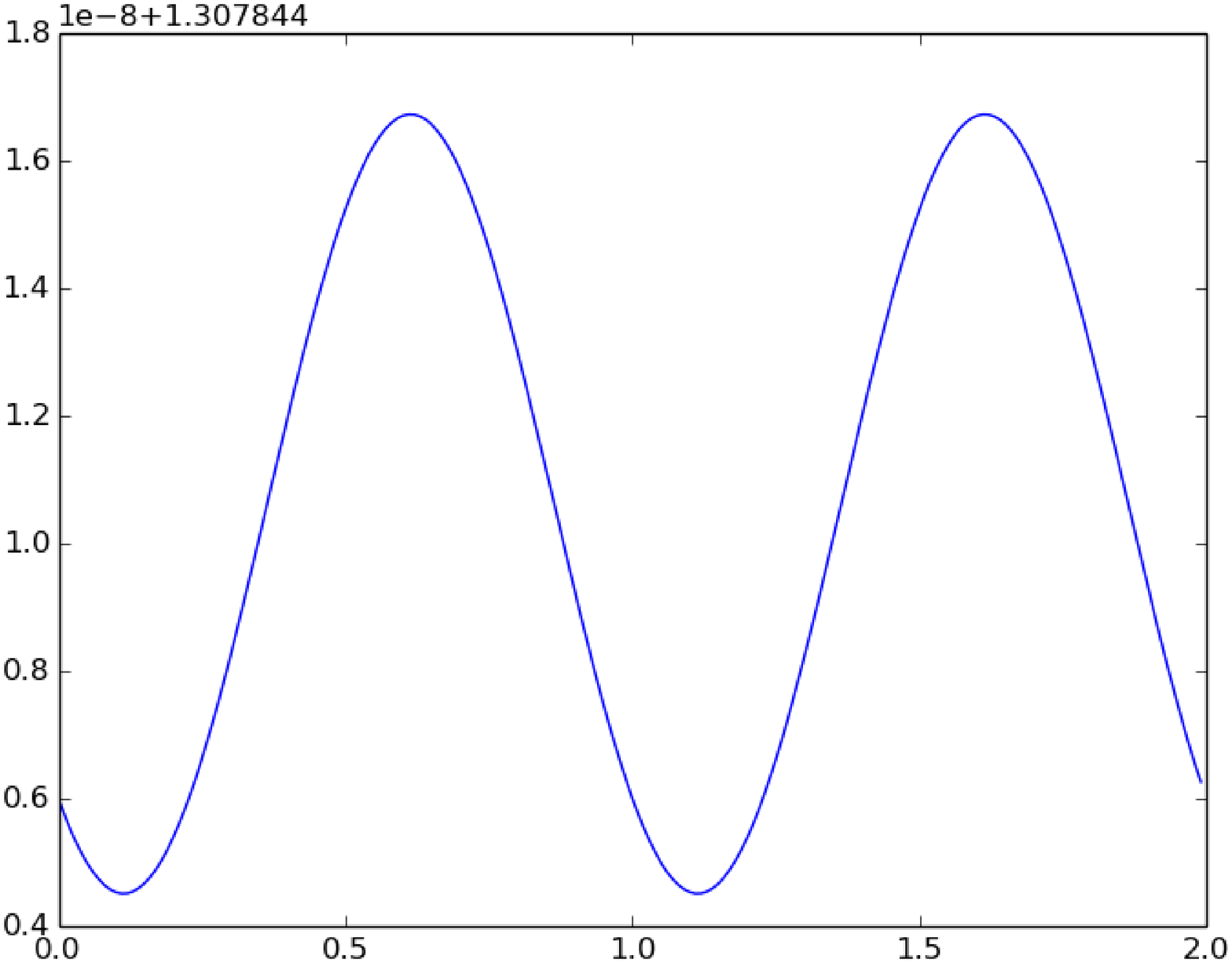}
        \end{subfigure}%
        \begin{subfigure}[b]{0.375\textwidth}
            \includegraphics[width=\textwidth]{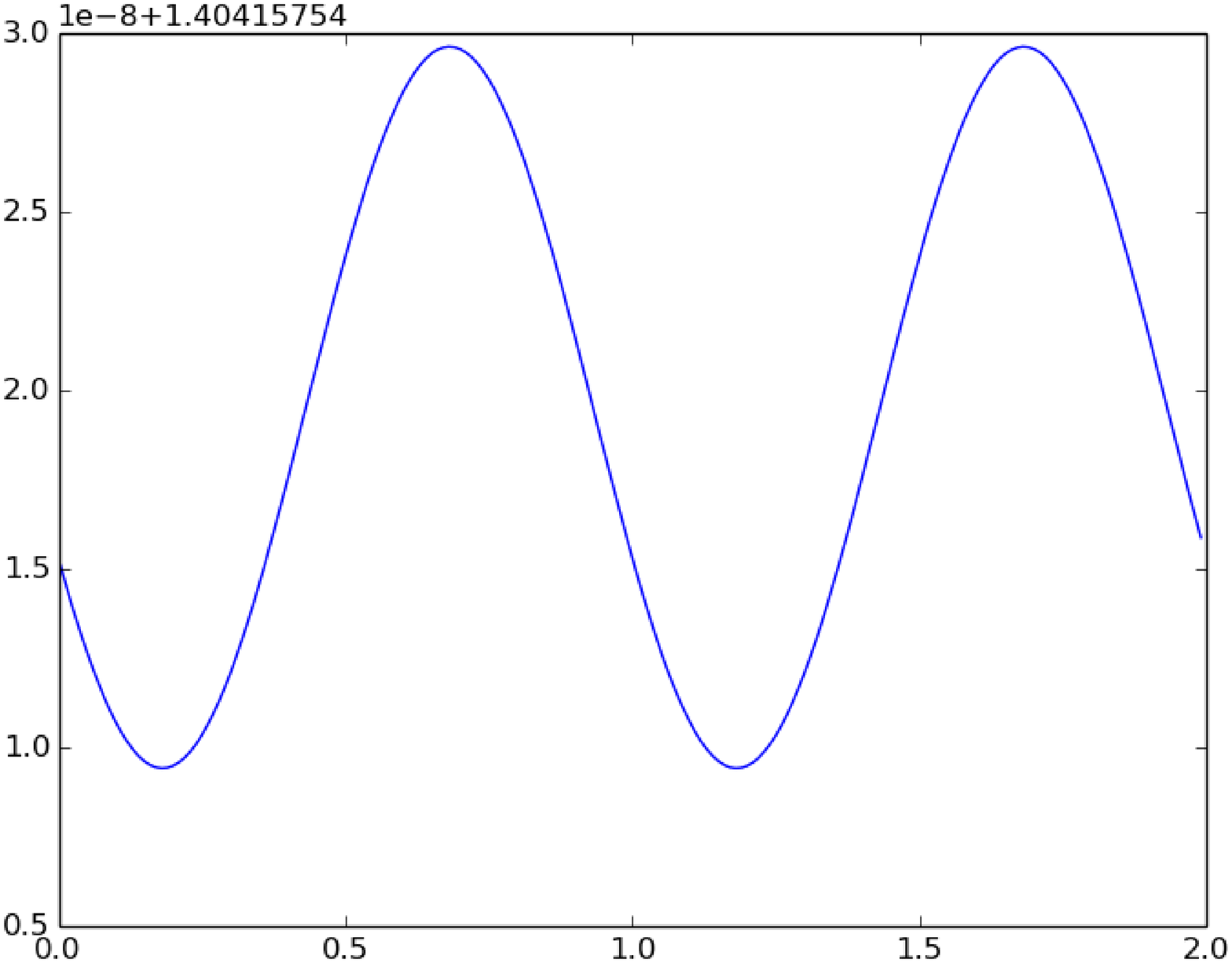}
        \end{subfigure}%
        \begin{subfigure}[b]{0.375\textwidth}
            \includegraphics[width=\textwidth]{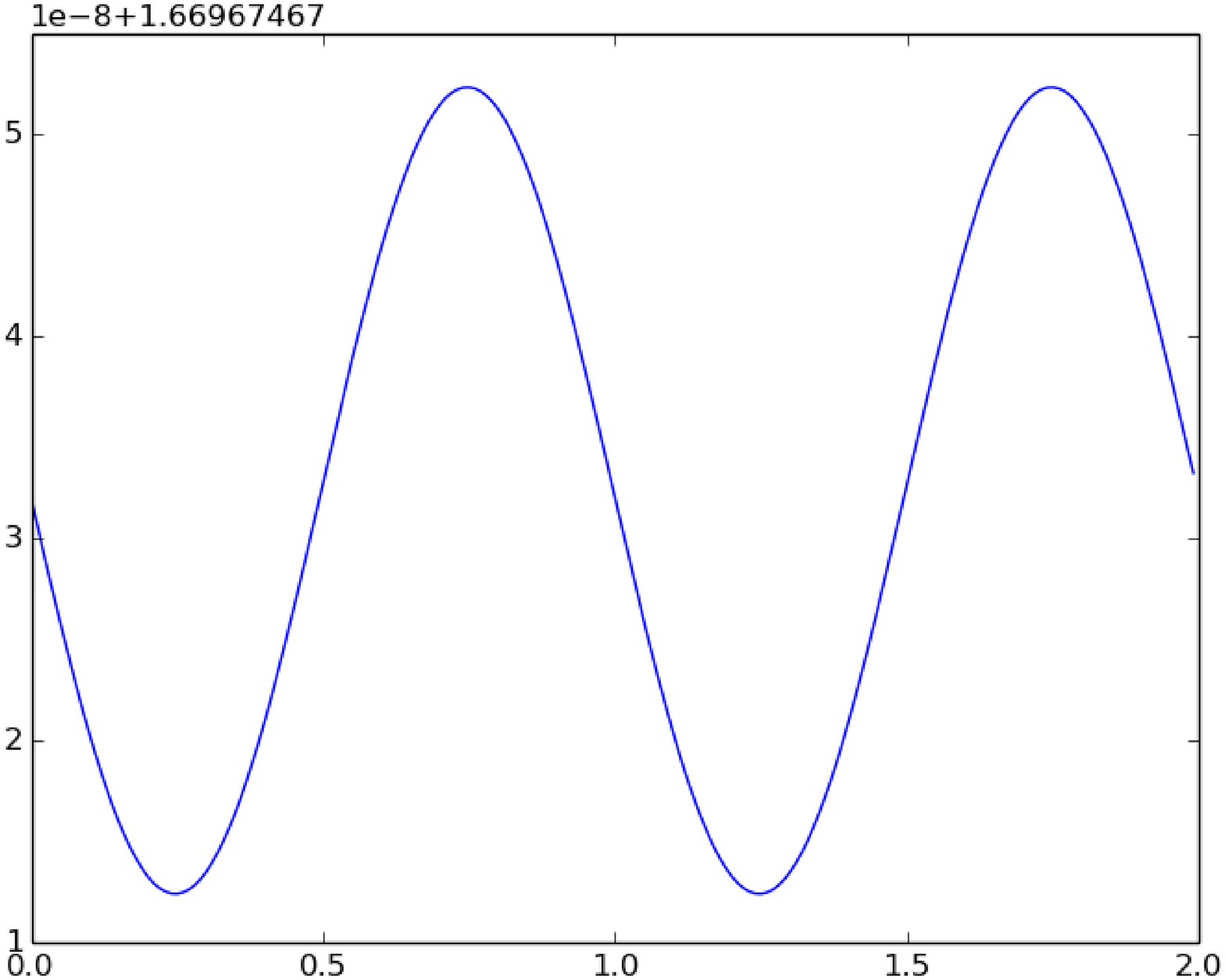}
        \end{subfigure}%
    \end{center}
    \caption{
        Plots of $H(\rho, x)$ for $\rho = -0.5, 0, 0.5$.
    }
    \label{HFigure}
\end{figure}

Again, the depth exhibits a phase transition: for $p=1/2$ we have
$D_n/\log n \to 1/\log 2$ almost surely, which doesn't hold for $p > 1/2$.  We note
that some of the results on the depth (namely, the convergence in probability and
the central limit theorem) are already known (see \cite{rais1993}), but our contribution is  
a novel derivation of these facts via the profile analysis.  Qualitatively, the oscillatory
behavior of the external profile that is responsible for the lack of local limit theorem for
the depth occurs also in both tries and digital search trees.

We now explain our approach to the analysis of the moments of $B_{n,k}$ in appropriate
ranges (we follow \cite{magnerPhD2015,ms2015}).  For this, we take an analytic approach 
\cite{Flajoletsedgewick2009,szpa2001Book}.
We first explain it for the analysis relevant
to $D_n$, and then show how to extend it for $H_n$ and $F_n$.
More details can be found in the next section.

We start by deriving a recurrence for the average profile, 
which we denote by $\mu_{n,k} := \E[B_{n,k}]$. It satisfies
\begin{align}
    \label{muRecurrence}
    \mu_{n,k} = (p^n + q^n)\mu_{n,k} + \sum_{j=1}^{n-1} { n\choose j } p^j q^{n-j} (\mu_{j,k-1} + \mu_{n-j,k-1})
\end{align}
for $n\geq 2$ and $k \geq 1$, with some initial/boundary conditions; 
most importantly, $\mu_{n,k} = 0$ for $k \geq n$ and any $n$.
Moreover, $\mu_{n,k} \leq n$ for all $n$ and $k$ owing to the elimination of
inconclusive queries.  This recurrence 
arises from conditioning on the number $j$ of objects that are included 
in the first query.  If $1 \leq j \leq n-1$ objects are included,
then the conditional expectation is a sum of contributions from those 
objects that are included and those that aren't.  If, on
the other hand, all objects are included or all are excluded from the 
first potential query (which happens with probability $p^n + q^n$),
then the partition element splitting constraint on the queries applies, 
the potential query is ignored as inconclusive, and the contribution is $\mu_{n,k}$.

The tools that we use to solve this recurrence (for details see \cite{magnerPhD2015,ms2015}) 
are similar to those of the analyses for digital trees \cite{szpa2001Book} 
such as tries and digital search trees (though the analytical details 
differ significantly). We first derive a functional equation for the 
Poisson transform 
$\Po{G}_k(z) = \sum_{m \geq 0} \mu_{m,k}\frac{z^m}{m!}e^{-z}$ of
$\mu_{n,k}$, which gives 
\[
    \Po{G}_k(z) = \Po{G}_{k-1}(pz) + \Po{G}_{k-1}(qz) + e^{-pz}(\Po{G}_k - 
\Po{G}_{k-1})(qz) + e^{-qz}(\Po{G}_{k} - \Po{G}_{k-1})(pz).
\]
This we write as
\begin{align}
\label{pg1}
    \Po{G}_k(z) = \Po{G}_{k-1}(pz) + \Po{G}_{k-1}(qz) + \Po{W}_{k,G}(z), 
\end{align}
We contrast this functional equation with those for tries \cite{park2008} and for 
digital search trees \cite{drmotaszpa2011}:
in tries, the expression $\Po{W}_{k,G}(z)$ does not appear, which significantly
simplifies the analysis in that case.  In digital search trees, the functional equation
is a differential equation, and the analysis is consequently quite different.

At this point the goal is to determine asymptotics for 
$\Po{G}_k(z)$ as $z\to\infty$ in a cone around the positive real axis.
When solving (\ref{pg1}), $\Po{W}_{k,G}(z)$ complicates the analysis because it has
no closed-form Mellin transform (see below); we handle it via its Taylor series.
Finally, depoissonization \cite{szpa2001Book} will allow us to transfer the
asymptotic expansion for $\Po{G}_k(z)$ back to one for $\mu_{n,k}$:
\begin{align*}
	\mu_{n,k}
    = \Po{G}_k(n) - \frac{n}{2}\Po{G}''_k(n) + O(n^{\epsilon-1}).
\end{align*}

To convert (\ref{pg1}) to an algebraic equation,
we use the \emph{Mellin transform} 
\cite{Flajolet95mellintransforms}, which, for a function 
$f:\R \to \R$ is given by
\[
    \Me{f}(s) = \int_{0}^\infty z^{s-1} f(z) \dee{z}.
\]
Using the Mellin transform identities and defining $T(s) = p^{-s} + q^{-s}$, 
we end up with an expression 
for the Mellin transform $\Me{G_k}(s)$ of $\Po{G}_k(z)$ of the form
\[
    \Me{G_k}(s) = \Gamma(s+1)A_k(s)( p^{-s} + q^{-s})^k
    =\Gamma(s+1)A_k(s) T(s)^k,
\]
where $A_k(s)$ (see (\ref{A_kDefinition}) below) is an infinite series arising from the contributions
coming from the function $\Po{W}_{k,G}(z)$:
\begin{align}
	A_k(s) = \sum_{j=0}^k T(s)^{-j} \sum_{m=j}^\infty T(-m)(\mu_{m,j} - \mu_{m,j-1})\frac{\Gamma(m+s)}{\Gamma(m+1)\Gamma(s+1)},
    \label{A_kInitialDef}
\end{align}
where we define $\mu_{m,-1} = 0$ for all $m$.
Note that it involves 
$\mu_{m,j} - \mu_{m,j-1}$ for various $m$ and $j$ 
(see  \cite{magnerPhD2015,magnerknesslszpa2014}).  
Locating and characterizing the singularities
of $\Me{G_k}(s)$ then becomes important.  We find
that, for any $k$, $A_k(s)$ is entire, with zeros at 
$s \in \Z \lintersect [-k, -1]$, so that $\Me{G_k}(s)$ is meromorphic,
with possible simple poles at the negative integers less than $-k$.  
The fundamental strip of $\Po{G}_k(z)$ then contains $(-k-1, \infty)$.
It turns out that the main asymptotic contribution comes from an
infinite number of saddle points (see (\ref{SaddlePoints}) below) defined by the kernel $T(s)= p^{-s} + q^{-s}$.	

We then must asymptotically invert the Mellin transform to recover $\Po{G}_k(z)$.  The Mellin inversion
formula for $\Me{G_k}(s)$ is given by
\begin{align}
    \label{MellinInversionFormula}
    \Po{G}_k(z) = \frac{1}{2\pi i} \int_{\rho - i\infty}^{\rho + i\infty} 
z^{-s}\Me{G_k}(s)\dee{s}
    = \frac{1}{2\pi i} \int_{\rho - i\infty}^{\rho + i\infty} 
z^{-s}\Gamma(s+1)A_k(s)T(s)^k\dee{s},
\end{align}
where $\rho$ is any real number inside the fundamental strip 
associated with $\Po{G}_k(z)$.
For $k$ in the range in which the profile grows polynomially (that coincides with
the range of interest in our analysis of $D_n$),
we evaluate this integral via the saddle point method 
\cite{Flajoletsedgewick2009}.  Examining         
$z^{-s}T(s)^k$ and solving the associated saddle point equation
\[
    \frac{\dee{ }}{\dee{s}} [k\log T(s) - s\log z] = 0,
\]
we find an explicit formula (\ref{RhoDefinition}) below for 
$\rho(\alpha)$, the real-valued saddle point of our integrand.  
The multivaluedness
of the complex logarithm then implies that there are \emph{infinitely many} 
regularly spaced saddle points $s_j$, $j \in \Z$, on this vertical
line:
\begin{align}
	\label{SaddlePoints}
	s_j = \rho(\alpha) + i\frac{2\pi j}{\log(p/q)}.
\end{align}
These lead directly to 		
oscillations in the $\Theta(1)$ factor in the final asymptotics
for $\mu_{n,k}$).  The main challenge in completing the saddle point 
analysis is then to elucidate
the behavior of $\Gamma(s+1)A_k(s)$ for $s\to\infty$ along 
vertical lines: it turns out that this function inherits
the exponential decay of $\Gamma(s+1)$ along vertical lines, 
and we prove it by splitting the sum defining $A_k(s)$
into two pieces, which decay exponentially for different reasons 
(the first sum decays as a result of the superexponential
decay of $\mu_{m,j}$ for $m = \Theta(j)$, which is outside 
the main range of interest).  We end up with an asymptotic
expansion for $\Po{G}_k(z)$ as $z\to\infty$ in terms of $A_k(s)$.

Finally, we must analyze the convergence properties of 
$A_k(s)$ as $k\to\infty$.  We find that it converges
uniformly on compact sets to a function $A(s)$ (see (\ref{A_kDefinition})) 
which is, because of the uniformity, entire.  We then
apply Lebesgue's dominated convergence theorem to conclude that we 
can replace $A_k(s)$ with $A(s)$ in the
final asymptotic expansion of $\Po{G}_k(z)$.  
All of this yields the following theorem which is proved in
\cite{magnerPhD2015,ms2015}.

\begin{theorem}[Moments and limiting distribution for $B_{n,k}$ for $k$ in the central region]
\label{CentralRangeTheorem}
    Let $\epsilon > 0$ be independent of $n$ and $k$, and fix
    $\alpha \in \left(\frac{1}{\log(1/q)}+\epsilon, \frac{1}{\log(1/p)}-\epsilon\right)$. 
    Then for $k = k_{\alpha,n} \sim \alpha \log n$:
    \parsec
    {\rm (i)} The expected external profile becomes
    \begin{align}
    \label{ws1}
        \E[B_{n,k}] 
        &= H(\rho(\alpha), \log_{p/q}(p^kn))
        \cdot \frac{n^{\beta(\alpha)}}{\sqrt{2\pi\kappa_*(\rho(\alpha))\alpha\log n}}
        \left( 1 + O(\sqrt{\log n}) \right), 
    \end{align}
    where
    \begin{align}
        \label{RhoDefinition}
        \rho(\alpha) = -\frac{1}{\log(p/q)}\log\left( \frac{\alpha\log(1/q)-1}
        {1-\alpha\log(1/p)} \right), &&     
        \beta(\alpha) = \alpha\log(T(\rho(\alpha))) - \rho(\alpha),  
    \end{align}
    and $\kappa_*(\rho)$ is an explicitly known function of $\rho$.
Furthermore, $H(\rho, x)$ (see Figure~\ref{HFigure}) 
    is a non-zero periodic function with period $1$ in $x$ given by
    \begin{align}
        H(\rho, x) = \sum_{j\in\Z} A(\rho+it_j)\Gamma(\rho+1+it_j)e^{-2j\pi i x}, 
    \end{align}
    where $t_j = 2\pi j/\log(p/q)$, and
    \begin{align}
        \label{A_kDefinition}
        A(s) = \sum_{j=0}^\infty T(s)^{-j} \sum_{n=j}^\infty T(-n)(\mu_{n,j} - \mu_{n,j-1})\frac{\phi_n(s)}{n!},
    \end{align}
    where $\phi_n(s) = \prod_{j=1}^{n-1} (s+j)$ for $n > 1$ and $\phi_n(s) = 1$ 
    for $n \leq 1$.  We recall that $T(s) = p^{-s} + q^{-s}$.  
    Here, $A(s)$ is an entire function which is zero at the negative integers.
    
    \parsec
    {\rm (ii)} The variance of the profile is
    $
        \Var[B_{n,k}] =\Theta(\E[B_{n,k}]).
    $
\parsec
    {\rm (iii)} The limiting distribution  of the normalized profile is Gaussian;
    that is,
    \[
        \frac{B_{n,k} - \mu_{n,k}}{\sqrt{\Var[B_{n,k}]}} \xrightarrow{D} \Normal(0, 1)
    \]
    where $\Normal(0,1)$ is the standard normal distribution.
\end{theorem}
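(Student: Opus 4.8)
The plan is to run the analytic pipeline sketched above and treat the three assertions in turn. For part~(i) I start from the recurrence~(\ref{muRecurrence}) for $\mu_{n,k}=\E[B_{n,k}]$, Poissonize it to the functional equation~(\ref{pg1}) for $\Po{G}_k(z)$, and apply the Mellin transform to obtain the closed form $\Me{G_k}(s)=\Gamma(s+1)A_k(s)T(s)^k$ with $A_k(s)$ as in~(\ref{A_kInitialDef}). The first lemma to prove is that every $A_k(s)$ is entire with simple zeros exactly at the integers of $[-k,-1]$, so that $\Me{G_k}(s)$ is meromorphic and the fundamental strip of $\Po{G}_k(z)$ contains $(-k-1,\infty)$; this follows by inspecting the Gamma-quotient $\Gamma(m+s)/(\Gamma(m+1)\Gamma(s+1))$ together with the a priori bound $\mu_{m,j}\le m$. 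I then evaluate the Mellin inversion integral~(\ref{MellinInversionFormula}) on the vertical line $\Re s=\rho(\alpha)$ by the saddle-point method applied to $z^{-s}T(s)^k$: solving the saddle equation $\frac{d}{ds}[k\log T(s)-s\log z]=0$ yields the real abscissa $\rho(\alpha)$ of~(\ref{RhoDefinition}), and because $|T(\rho+i\tau)|$ is periodic in $\tau$ with period $2\pi/\log(p/q)$ (equivalently, $\log T$ is multivalued in the imaginary direction), the same line carries the infinite family of equally spaced saddle points $s_j$ of~(\ref{SaddlePoints}). Summing the Gaussian contributions of all the $s_j$ produces the $1$-periodic function $H(\rho,x)$ together with the prefactor $n^{\beta(\alpha)}/\sqrt{2\pi\kappa_*(\rho(\alpha))\alpha\log n}$, and a standard depoissonization step transfers the estimate from $\Po{G}_k(n)$ back to $\mu_{n,k}$.

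The hard part in~(i) is to justify localizing the integral near the saddles, i.e.\ to show that $\Gamma(s+1)A_k(s)$ decays fast enough along $\Re s=\rho(\alpha)$ that the tails are negligible and the saddle sum converges. The factor $\Gamma(s+1)$ already decays exponentially in $|\Im s|$, so the work is to show $A_k(s)$ cannot spoil this. I would split the double sum in~(\ref{A_kInitialDef}) at $m=Cj$ for a suitable constant $C$: the near-diagonal part $m=\Theta(j)$ is handled by the superexponential decay of $\mu_{m,j}$ when $m$ is close to $j$ (very few objects survive close to $m$ queries), while the remaining part is dominated by a convergent series built from the decay of $T(-m)\Gamma(m+s)/\Gamma(m+1)$. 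In parallel I would prove that $A_k(s)\to A(s)$ as $k\to\infty$ uniformly on compact sets --- so that $A(s)$ of~(\ref{A_kDefinition}) is entire --- and invoke dominated convergence to replace $A_k$ by $A$ inside the saddle sum.

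For part~(ii) I set up the companion recurrence for the second moment $\E[B_{n,k}^2]$ (equivalently for $\E[B_{n,k}(B_{n,k}-1)]$), which is an analogue of~(\ref{muRecurrence}) with an extra cross term of the form $\sum_{j}\binom{n}{j}p^jq^{n-j}\mu_{j,k-1}\mu_{n-j,k-1}$ coming from pairs of identified objects that fall into the two distinct children of the root. Poissonizing, Mellin-transforming, and rerunning the saddle-point evaluation, one finds that the leading $n^{2\beta(\alpha)}$ parts of $\E[B_{n,k}^2]$ and of $\mu_{n,k}^2$ cancel, leaving a residual of the same polynomial order $n^{\beta(\alpha)}$ as the mean; hence $\Var[B_{n,k}]=\Theta(\E[B_{n,k}])$, and only positivity of the leading constant --- read off from the surviving saddle contribution --- is needed. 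The obstacle here is purely computational: tracking which Mellin factor carries the dominant saddle in each term and verifying the cancellation.

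For part~(iii) I pass to the Poisson transform $\Po{G}_k(z,u)=\sum_m\E[u^{B_{m,k}}]\,z^m e^{-z}/m!$ of the probability generating function, which, because the two root subtrees are independent given the first split, satisfies a product functional equation $\Po{G}_k(z,u)=\Po{G}_{k-1}(pz,u)\Po{G}_{k-1}(qz,u)$ up to the correction terms coming from the modification of inconclusive queries. Setting $u=e^{t/\sqrt{\Var[B_{n,k}]}}$ and feeding in the mean and variance asymptotics from~(i)--(ii), I would show that the normalized cumulant generating function converges to $t^2/2$ uniformly for $t$ in compacts, either by a contraction/bootstrap argument on this functional equation combined with the saddle-point estimates, or, more probabilistically, by writing $B_{n,k}$ up to negligible error as a sum of $\Theta(n^{\beta(\alpha)}/\sqrt{\log n})$ asymptotically independent bounded contributions from \emph{fringe subtrees} and applying a Lindeberg central limit theorem. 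Either way the crux is uniform control, in $u$ near $1$, of the error terms in the functional equation so that the \emph{quasi-power} form $\Po{G}_k(z,u)\approx\exp(\text{(linear in }t)+\tfrac12 t^2+\cdots)$ can be extracted and depoissonized.
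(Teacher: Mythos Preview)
Your proposal is correct and follows essentially the same approach that the paper outlines (the paper itself defers the full proof to \cite{magnerPhD2015,ms2015}, but the sketch preceding the theorem statement matches your plan for part~(i) point by point: Poissonization, Mellin transform yielding $\Gamma(s+1)A_k(s)T(s)^k$, saddle-point inversion along $\Re s=\rho(\alpha)$ with the infinite family~(\ref{SaddlePoints}), the two-piece splitting of $A_k(s)$ to inherit the $\Gamma$-decay, uniform convergence $A_k\to A$ plus dominated convergence, and depoissonization). Your outlines for~(ii) and~(iii) are also standard and consistent with what the cited references do; the paper gives no further detail on those parts here.
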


We should point out that the unusual behavior of $D_n$ in R\'enyi's 
problem is a direct
consequence of the oscillatory behavior of the profile, which disappears for
the symmetric case. Furthermore, for the height and fillup level analyses we
need to extend Theorem~\ref{CentralRangeTheorem} beyond its 
original central range for $\alpha$, as discussed
in the next section.

\section{Proof sketches}
\label{ProofSketches}
Now we give sketches of the proofs of Theorems~\ref{HeightFillupTheorem} 
and \ref{DepthTheorem} with more details regarding the proof of
Theorem~\ref{HeightFillupTheorem} in the forthcoming journal version \cite{drmotamagnerszpa2016}.  
In particular, in this conference version, we only sketch derivations for
$H_n$ and for $F_n$ by upper and lower bounding, respectively.
As stated earlier, the proof of Theorem~\ref{CentralRangeTheorem} 
can be found in \cite{magnerPhD2015,ms2015}.

\subsection{Sketch of the proof of Theorem~\ref{HeightFillupTheorem}}
To prove our results for $H_n$ and $F_n$, we extend the analysis of $B_{n,k}$
to the boundaries of the central region 
(i.e., $k\sim \log_{1/p} n$ and $k\sim\log_{1/q} n$).

\med
{\bf Derivation of $H_n$}.
Fixing any $\epsilon > 0$, we write, for the lower bound on the height, 
\[
	k_L = \log_{1/p} n + (1-\epsilon)\psi(n)
\]    
and, for the upper bound,
\[
	k_U = \log_{1/p} n + (1 + \epsilon)\psi(n),
\]    
for a function $\psi(n) = o(\log n)$ which we are
to determine.  In order for the first and second moment methods to work, we require
$  %
    \mu_{n,k_L} \xrightarrow{n\to\infty} \infty %
$
and %
$
    \mu_{n,k_U} \xrightarrow{n\to\infty} 0.  %
$    
(We additionally need that $\Var[B_{n,k_L}] = o(\mu_{n,k_L}^2)$, but this is not too hard to show by induction
using the recurrence for $\Po{V}_k(z)$, the Poisson variance of $B_{n,k}$.)
In order to identify the $\psi(n)$ at which this transition occurs, we define $k = \log_{1/p} n + \psi(n)$, and
the plan is to estimate $\E[B_{n,k}]$ via the integral representation (\ref{MellinInversionFormula}) for its
Poisson transform.  Specifically, 
we consider the inverse Mellin integrand for some $s=\rho \in \Z^{-} + 1/2$ 
to be set later.  This is sufficient for the
upper bound, since, by the exponential decay of the $\Gamma$ function, 
the entire integral is at most of the same 
order of growth as the integrand on the real axis.  
We expand the integrand in (\ref{MellinInversionFormula}), that is,
\begin{align}
    \label{J_kFormula}
    J_k(n, s) 
    := \sum_{j=0}^k n^{-s}T(s)^{k-j} \sum_{m \geq j} T(-m)(\mu_{m,j} - \mu_{m,j-1}) \frac{\Gamma(m+s)}{\Gamma(m+1)},
\end{align}
and apply a simple extension of Theorem~2.2, part (iii) of      
\cite{magnerknesslszpa2014} to approximate $\mu_{m,j}-\mu_{m,j-1}$         
when $j\to\infty$ and is close enough to $m$:  
\begin{lemma}[Precise asymptotics for $\mu_{n,k}$, $k\to\infty$ and $n$ near $k$]
    \label{KnesslEstimate}
    Let $p \geq q$.  For $n\to\infty$ with $1 \leq k < n$ and $\log^2(n-k)=o(k)$,  
    \begin{align}
        \mu_{n,k}
        \sim (n-k)^{3/2 + \frac{\log q}{\log p}} 
\frac{n!}{(n-k)!}p^{k^2/2 + k/2}q^{k} \cdot 
\exp\left( -\frac{\log^2(n-k)}{2\log(1/p)} \right)\Theta(1).
    \end{align}

    Moreover, for $n \to \infty$ and $k < n$, for some constant $C > 0$,
    \begin{align*}
        \mu_{n,k}
        \leq C\frac{n!}{(n-k-1)!}p^{k^2/2 + k/2 + O(log(n-k)^2)} q^{k}. 
    \end{align*} 
\end{lemma}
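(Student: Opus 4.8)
The plan is to read both estimates off the basic recurrence~(\ref{muRecurrence}), building on (and mildly extending) the near-diagonal analysis of $\mu_{n,k}$ from \cite{magnerknesslszpa2014}. First I would rewrite (\ref{muRecurrence}) in the form
\[
    \mu_{n,k} = \frac{1}{1-p^{n}-q^{n}}\sum_{j=1}^{n-1}\binom{n}{j}p^{j}q^{n-j}\bigl(\mu_{j,k-1}+\mu_{n-j,k-1}\bigr),
\]
and exploit the boundary condition $\mu_{m,k}=0$ for $m\le k$: when $\ell:=n-k$ is small relative to $k$, only $O(\ell)$ terms of the double sum survive, namely those with $j\in\{k,\dots,n-1\}$ (from $\mu_{j,k-1}$) and those with $j\in\{1,\dots,\ell\}$ (from $\mu_{n-j,k-1}$). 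For $p\ge q$ the first family dominates, since the binomial weights $\binom{n}{j}p^{j}q^{n-j}$ are largest for $j$ near $n$.

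Next I would normalize by the conjectured order of growth, setting $\mu_{n,k}=\frac{n!}{(n-k)!}\,p^{k^{2}/2+k/2}q^{k}\,g_{n,k}$; using $(k-1)^{2}/2+(k-1)/2=k^{2}/2-k/2$, the recurrence becomes one for $g_{n,k}$ in which the binomial weights re-expand into a binomial convolution kernel of the form $\binom{\ell+1}{r}p^{r}q^{\ell+1-r}$ in a new distance variable $r$ (up to normalization, plus lower-order terms). To leading order $g_{n,k}$ therefore satisfies a one-dimensional smoothing recurrence in $\ell$ alone, whose dominant contributions come from $r$ close to $\ell$. Feeding in the ansatz $g\sim\ell^{c}\exp\!\bigl(-\log^{2}\ell/(2\log(1/p))\bigr)$ and matching these contributions pins down $c=3/2+\log q/\log p$ together with the Gaussian-in-$\log\ell$ correction; the residual dependence of $g_{n,k}$ on $n$ and $k$ separately is shown to stay bounded and bounded away from $0$, which is the source of the $\Theta(1)$ in the statement.

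For the uniform upper bound, valid for all $k<n$ and not only for small $\ell$, I would instead iterate (\ref{muRecurrence}) crudely through $k$ levels: each level contributes a factor corresponding to the resolution of one object, which accumulates the product $\frac{n!}{(n-k-1)!}p^{k^{2}/2+k/2}q^{k}$, while the a priori bound $\mu_{m,j}\le m$ (a consequence of ignoring inconclusive queries) controls the non-dominant terms and the accumulated slack is absorbed into the $p^{O(\log^{2}(n-k))}$ factor.

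I expect the main obstacle to be the control of the $\Theta(1)$ factor in the first estimate: one must verify that the reduction of the $g$-recurrence to a distance-only recurrence is faithful to leading order \emph{uniformly} over the full range $\log^{2}(n-k)=o(k)$, and that the errors from the discarded boundary terms and from the factor $1-p^{n}-q^{n}\ne 1$ do not accumulate over the $\approx\ell$ recursive steps. This is precisely the bookkeeping performed in the proof of Theorem~2.2(iii) of \cite{magnerknesslszpa2014}; the ``simple extension'' needed here is to widen the admissible range of $n-k$ to $\log^{2}(n-k)=o(k)$ and to extract the crude uniform upper bound, both of which follow by the same method with only routine changes.
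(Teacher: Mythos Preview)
Your proposal is aligned with the paper's treatment: the paper does not give a self-contained proof of this lemma at all, but simply invokes it as ``a simple extension of Theorem~2.2, part (iii) of \cite{magnerknesslszpa2014}''. Your sketch---rewriting the recurrence~(\ref{muRecurrence}) with the boundary condition $\mu_{m,k}=0$ for $m\le k$, normalizing by $\frac{n!}{(n-k)!}p^{k^2/2+k/2}q^k$, and reducing to a near-one-dimensional recurrence in $\ell=n-k$---is exactly the mechanism behind that cited result, and you correctly identify that the only additional work here is (a) widening the range to $\log^2(n-k)=o(k)$ and (b) extracting the cruder uniform bound. So in substance you and the paper agree; you have simply written out more of the argument than the paper does.
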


Now, we continue with the evaluation of (\ref{J_kFormula}).
The $j$th term of (\ref{J_kFormula}) is then of order
$
    p^{\nu_j(n, s)},
$
where we set
\begin{align*}
    \nu_j(n, s) 
    = ~& (j-\psi(n))^2/2 
      + (j-\psi(n))(s + \log_{1/p}(1 + (p/q)^s) + \psi(n) + 1) \\
      &- \log_{1/p} n \log_{1/p} (1 + (p/q)^s) + \psi(n)^2/2 + o(\psi(n)^2).
\end{align*}
The factor $T(s)^{k-j}$ ensures that the bounded $j$ terms are negligible.

Our next goal is to find the $j$ which gives the dominant contribution to the sum
in (\ref{J_kFormula}); that is, the $j$ for which the contributions 
$p^{\nu_j(n,s)}$ dominate.
By elementary calculus, we can find the $j$ term
which minimizes $\nu_j(n,s)$:   
\begin{align*}
    j =  -(s + \log_{1/p}(1 + (p/q)^s) + 1).  
\end{align*}
Then $\nu_j(n, s)$ for this value of $j$ becomes
\begin{align}
	\nu_j(n, s) 
    &= -\frac{(s + \log_{1/p}(1 + (p/q)^s) + \psi(n) + 1)^2}{2}  \nonumber \\
    &~ ~ ~~- \log_{1/p} n \log_{1/p} (1 + (p/q)^s) + \psi(n)^2/2 + o(\psi(n)^2).
    \label{OptimalNu}
\end{align}
We then minimize over all $s$, which requires us to split into the symmetric
and asymmetric cases.

\paragraph{Symmetric case:}
When $p=q=1/2$, we have $\log_{1/p}(1 + (p/q)^s) = \log_2(2) = 1$, so that the
expression for $\nu_j(n, s)$ simplifies, and we get $s = -\psi(n) + O(1)$.
The optimal value for $\nu_j(n, s)$  then becomes
\begin{align}
    \label{JSumMaxValueSymmetric}
    \nu_j(n, s)
    = -\log_{2} n + \psi(n)^2/2 + o(\psi(n)^2).
\end{align}
We have thus succeeded in finding a likely candidate for the range of $j$ terms 
that contribute maximally, as well as an upper bound on their contribution.  This
gives a tight upper bound on $J_k(n, s)$ and, hence, on $\Po{G}_{k}(n)$, of
$\Theta(2^{-\nu_j(n, s)})$.  

Now, to find $\psi(n)$ for which there is a phase transition in this bound from
tending to $\infty$ to tending to $0$, 
we set the exponent in the above expression equal to zero and
solve for $\psi(n)$.  
This gives
\[
    -\log_2 n + \psi(n)^2/2(1 + o(1)) = 0
    \implies
    \psi(n) \sim \sqrt{2\log_{2} n},
\]
as expected.

\paragraph{Asymmetric case:}
On the other hand, when $p > 1/2$, the equation that we need to solve to find
the minimizing value of $s$ for (\ref{OptimalNu}) is a bit more complicated,
owing to the fact that $\log_{1/p}(1 + (p/q)^s)$ now depends on $s$: taking
a derivative with respect to $s$ in (\ref{OptimalNu}) and setting this equal
to $0$, after some algebra, we must solve
\begin{align}
    -\frac{(p/q)^s\log(p/q)}{\log(1/p)}\log_{1/p} n - \psi(n)(1 + O((p/q)^s)) - s(1 + O((p/q)^s))
    + O((p/q)^s)
    = 0
    \label{SolveForS}
\end{align}
for $s$.
Here, we note that we used the approximation
\[
	\log_{1/p}(1 + (p/q)^s)
    = \frac{(p/q)^s}{\log(1/p)} + O((p/q)^2),
\]
which is valid since we are looking for $s \to -\infty$.

To find a solution to (\ref{SolveForS}), we first note that it implies that 
$s < -\psi(n)$ (since the first term involving $\log n$ is negative), 
and, if $\psi(n) > 0$, this implies that
\begin{align}
	-\psi(n) - s = -O(s).
    \label{PsiSInequality}
\end{align}
The plan, then, is to use this to guess a solution $s$ for (\ref{SolveForS}), which
we can then verify.  The equality (\ref{PsiSInequality}) suggests that we replace
$-\psi(n) - s + O((p/q)^s)$ with $-C\cdot s$ in (\ref{SolveForS}), for some constant $C > 0$. 
Then the equation becomes
\begin{align*}
	-Cs  -\frac{(p/q)^s\log(p/q)}{\log(1/p)}\log_{1/p} n = 0.
\end{align*}
After some trivial rearrangement and multiplication of both sides by $\log(p/q)$, we get
\begin{align*}
	-s\log(p/q) \cdot e^{-s\log(p/q)} = \Theta(\log n).
\end{align*}
Setting $W = -s\log(p/q)$ brings us to an expression of the form that defines the Lambert
$W$ function \cite{AbramowitzStegun} (i.e., a function $W(z)$ satisfying $W(z)e^{W(z)} = z$).

Using the asymptotics of the $W$ function for large $z$ \cite{AbramowitzStegun},
we thus find that 
\begin{align*}
	s = -\log_{p/q}\log n + O(\log\log\log n).
\end{align*}
Note that $s \to -\infty$, as required.  This may be plugged into (\ref{OptimalNu}) to 
see that it is indeed a solution to the equation.

Now, to find the correct choice of $\psi(n)$ for which there is a phase transition, we
plug this choice of $s$ into (\ref{OptimalNu}), set it equal to $0$, and solve for
$\psi(n)$.  This gives
\begin{align}
	\psi(n) 
    = -\frac{s}{2}
    = \frac{1}{2}\log_{p/q}\log n + O(\log\log\log n),
\end{align}
as desired.

Note that replacing $\psi(n)$ in (\ref{OptimalNu}) with $(1+\epsilon)\psi(n)$ yields
a maximum contribution to the inverse Mellin integral of
\begin{align}
	J_{k_U}(n, s)
    = O(p^{\frac{\epsilon}{2}(\log_{p/q}\log n)^2 + o((\log\log n)^2)}) \to 0.
\end{align}
When we replace $\psi(n)$ with $(1-\epsilon)\psi(n)$, we get
\begin{align}
	J_{k_L}(n, s)
    = O(p^{-\frac{\epsilon}{2}  (\log_{p/q}\log n)^2 + o((\log\log n)^2)}),
\end{align}
so that the upper bound tends to infinity (in \cite{drmotamagnerszpa2016}, we prove a
matching lower bound).

The above analysis gives asymptotic estimates for $\Po{G}_k(n)$.  We then apply analytic 
depoissonization \cite{szpa2001Book} to get
\begin{align*}
	\mu_{n,k}
    = \Po{G}_k(n) - \frac{n}{2}\Po{G}''_k(n) + O(n^{\epsilon-1}),
\end{align*}
(where the second term can be handled in the same way as the first).
This gives the claimed result.


\med
{\bf Derivation of $F_n$}.
We now set $k = \log_{1/q} n + \psi(n)$ and
\begin{align}
    \label{k_Lk_UFillup}
    k_L = \log_{1/q} n + (1+\epsilon)\psi(n), &&
    k_U = \log_{1/q} n + (1 - \epsilon)\psi(n).
\end{align}
Here, $\psi(n) = o(\log n)$ is to be determined so as to satisfy
$\mu_{n,k_L} \to 0$ and $\mu_{n,k_U} \to \infty$.
We use a technique similar to that used in the height proof to determine
$\psi(n)$, except now the $\Gamma$ function asymptotics play a role, since we will
choose $\rho \in \R$ tending to $\infty$.  Our first task is to upper bound (as tightly as possible), for
each $j$, the magnitude of the $j$th term of (\ref{J_kFormula}).  First, we upper bound
\begin{align}
    T(-m)(\mu_{m,j} - \mu_{m,j-1})
    \leq 2p^m \mu_{m,j}
    \leq 2p^m m,
    \label{am1}
\end{align}
using the boundary conditions on $\mu_{m,j}$.  Next, we apply Stirling's formula to get
\begin{align}
    \frac{\Gamma(m+\rho)}{\Gamma(m+1)} 
    &\sim \sqrt{1 + \rho/m} \left( \frac{m+\rho}{e}\right)^{m+\rho} \left(\frac{m+1}{e} \right)^{-(m+1)} \\
    &= e^{(m+\rho)\log(m+\rho) - (m+\rho) + m+1 - (m+1)\log(m+1) + O(\log\rho)} \\
    &= \exp( (m+\rho)\log(m+\rho) - (m+1)\log(m+1) + O(\rho)) \\
    &= \exp( m\log(m(1 + \rho/m)) + \rho\log(\rho(1 + m/\rho)) - m\log m - \log m + O(\rho)) \\
    &= \exp( m\log(1 + \rho/m) + \rho\log(\rho) + \rho\log(1 + m/\rho) - \log m + O(\rho)). \label{am2}
\end{align}
Multiplying (\ref{am1}) and (\ref{am2}), then optimizing over all $m \geq j$, we find that the 
maximum term of the $m$ sum occurs at $m = \rho p/q$ and has a value of
\begin{align}
    \label{am3}
    \exp(\rho\log\rho + O(\rho)).
\end{align}
Now, observe that when $\log m \gg \log\rho$, the contribution of the $m$th term is 
$p^{m + o(m)} = e^{-\Theta(m)}$.  Thus, setting $j' = \rho^{\log\rho}$ (note that $\log j' = (\log\rho)^2 \gg \log\rho$),
we split the $m$ sum into two parts:
\[
    \sum_{m \geq j} 2p^m m \frac{\Gamma(m+\rho)}{\Gamma(m+1)}
    = \sum_{m=j}^{j'} 2p^m m \frac{\Gamma(m+\rho)}{\Gamma(m+1)}
    + \sum_{m = j'+1}^{\infty} 2p^m m \frac{\Gamma(m+\rho)}{\Gamma(m+1)}.
\]
The terms of the initial part can be upper bounded by (\ref{am3}), 
while those of the final part are upper bounded by $e^{-\Theta(m)}$ 
(so that the final part is the tail of a geometric series).
This gives an upper bound of
\[
    j' e^{\rho\log\rho + O(\rho)} + e^{-\Theta(j')}
    = e^{(\log\rho)^2 + \rho\log\rho + O(\rho)}
    = e^{\rho\log\rho + O(\rho)},
\]
which holds for any $j$.

Multiplying this by $n^{-\rho}T(\rho)^{k-j} = q^{\rho\cdot(j-\psi(n)) + (j - \psi(n) - \log_{1/q} n) \log_{1/q} (1 + (q/p)^\rho)}$ 
gives
\begin{align}
    \label{am4}
    q^{\rho(j-\psi(n)) + (j-\psi(n) - \log_{1/q} n)\log_{1/q}(1 + (q/p)^{\rho}) - \rho\log_{1/q}\rho + O(\rho)}.
\end{align}
Maximizing over the $j$ terms, we find that the largest contribution
comes from $j=0$.  Then, just as in the height upper bound, the behavior with respect
to $\rho$ depends on whether or not $p = q$, because $\log_{1/q}(1 + (q/p)^{\rho}) = 1$ when $p=q$ and is
dependent on $\rho$ otherwise.  Taking this into account and minimizing over $\rho$ gives that the maximum 
contribution to the $j$ sum is minimized by setting $\rho=2^{-\psi(n)-\frac{1}{\log 2}}$ when $p=q$ and
$\rho \sim \log_{p/q}\log n$ otherwise. 
Plugging these choices for $\rho$ into the exponent of (\ref{am4}), setting it equal to $0$, and solving
for $\psi(n)$ gives $\psi(n) = -\log_2\log n + O(1)$ when $p=q$ and $\psi(n) \sim -\log_{1/q}\log\log n$
when $p > q$.  The evaluation of the inverse Mellin integral with $k=k_L$ as defined in (\ref{k_Lk_UFillup})
and the integration contour given by $\Re(s) = \rho$ proceeds along lines similar to the height proof,
and this yields the desired result.

We remark that the lower bound for $F_n$ may also be derived by relating it to the analogous quantity in 
regular tries: by definition of the fillup level, there are no unary paths above the fillup level in a standard
trie.  Thus, when converting the corresponding PATRICIA trie, no path compression occurs above this level, which
implies that $F_n$ for PATRICIA is lower bounded by that of tries (and the typical value for tries is the same
as in our theorem for PATRICIA).  We include the lower bound for $F_n$ via the bounding of the inverse Mellin
integral because it is similar in flavor to the corresponding proof of the upper bound (for which no short proof
seems to exist).

The upper bound for $F_n$ can similarly be handled by an exact evaluation of the inverse Mellin transform.


\subsection{Proof of Theorem~\ref{DepthTheorem}}
Using Theorem~\ref{CentralRangeTheorem}, we can prove Theorem~\ref{DepthTheorem}.  

\med{\bf Convergence in probability: }
For the typical value of $D_n$, we show that
\begin{align}
    \Pr[D_n < (1-\epsilon)\frac{1}{h(p)}\log n] \xrightarrow{n\to\infty} 0, &&
    \Pr[D_n > (1+\epsilon)\frac{1}{h(p)}\log n] \xrightarrow{n\to\infty} 0.
\end{align}
For the lower bound, we have
\[
    \Pr[D_n < (1-\epsilon)\frac{1}{h(p)}\log n]
    = \sum_{k=0}^{\floor{(1-\epsilon)\frac{1}{h(p)}\log n}} \Pr[D_n = k]
    = \sum_{k=0}^{\floor{(1-\epsilon)\frac{1}{h(p)}\log n}} \frac{\mu_{n,k}}{n}.
\]
We know from Theorem~\ref{CentralRangeTheorem} and 
the analysis of $F_n$ that, in the range of this sum,
$\mu_{n,k} = O(n^{1-\epsilon})$.  Plugging this in, we get
\[
    \Pr[D_n < (1-\epsilon)\frac{1}{h(p)} \log n]
    = \sum_{k=0}^{\floor{(1-\epsilon)\frac{1}{h(p)}\log n}} O(n^{-\epsilon})
    = O(n^{-\epsilon}\log n) = o(1).
\]

The proof for the upper bound is very similar, except that we appeal to the
analysis of $H_n$ instead of $F_n$.

\med{\bf No almost sure convergence: }
To show that $D_n/\log n$ does not converge almost surely, we show that
\begin{align}
    \label{D_nNoAlmostSure}
    \liminf_{n\to\infty} D_n/\log n = 1/\log(1/q), &&
    \limsup_{n\to\infty} D_n/\log n = 1/\log(1/p).
\end{align}
For this, we first show that, almost surely, $F_n/\log n \xrightarrow{n\to\infty} 1/\log(1/q)$
and $H_n/\log n \xrightarrow{n\to\infty} 1/\log(1/p)$.  Knowing this, we consider the following
sequences of events: $A_n$ is the event that $D_n = F_n+1$, and $A'_n$ is the event that $D_n = H_n$.
We note that all elements of the sequences are independent, and $\Pr[A_n], \Pr[A'_n] \geq 1/n$.  This
implies that $\sum_{n=1}^\infty \Pr[A_n] = \sum_{n=1}^\infty \Pr[A'_n] = \infty$, so that the Borel-Cantelli
lemma tells us that both $A_n$ and $A'_n$ occur infinitely often almost surely (moreover, $F_n < D_n \leq H_n$
by definition of the relevant quantities).  This proves (\ref{D_nNoAlmostSure}).

To show the claimed almost sure convergence of $F_n/\log n$ and $H_n/\log n$, we cannot apply the
Borel-Cantelli lemmas directly, because the relevant sums do not converge.  Instead, we apply a trick which was
used in \cite{Pittel85}.  
We observe that both $(F_n)$ and $(H_n)$ are non-decreasing sequences.  Next, we show that, on some appropriately chosen
subsequence, both of these sequences, when divided by $\log n$, converge almost surely to their respective
limits.  Combining this with the observed monotonicity yields the claimed almost sure convergence, and,
hence, the equalities in (\ref{D_nNoAlmostSure}).

We illustrate this idea more precisely for $H_n$.  By our analysis above, we know that
\[
    \Pr[|H_n/\log n - 1/\log(1/p)| > \epsilon] = O(e^{-\Theta(\log\log n)^2}).
\]
Then we fix $t$, and we define $n_{r,t} = 2^{t^2 2^{2r}}$.  
On this subsequence, by the probability
bound just stated, we can apply the Borel-Cantelli lemma to conclude 
that $H_{n_{r,t}}/\log(n_{r,t}) \xrightarrow{r\to\infty} 1/\log(1/p) \cdot (t+1)^2/t^2$ 
almost surely.  Moreover, for every $n$, we can choose $r$ such 
that $n_{r,t} \leq n \leq n_{r,t+1}$.
Then
\[
    H_n/\log n \leq H_{n_{r,t+1}}/\log n_{r,t},
\]
which implies
\[
    \limsup_{n\to\infty} \frac{H_n}{\log n} \leq 
    \limsup_{r\to\infty} \frac{H_{n_{r,t+1}}}{\log n_{r,t+1}} 
\frac{\log n_{r,t+1}}{\log n_{r,t}}
    = \frac{1}{\log(1/p)} \cdot \frac{(t+1)^2}{t^2}.
\]
Taking $t \to \infty$, this becomes $1/\log(1/p)$, as desired.  
The argument for the $\liminf$ is similar,
and this establishes the almost sure convergence of $H_n$.  
The derivation is entirely similar for $F_n$.

\med{\bf Asymptotics for probability mass function of $D_n$: }
The asymptotic formula for $\Pr[D_n = k]$ with $k$ as in the theorem
follows directly from the fact that $\Pr[D_n = k] = \E[B_{n,k}]/n$, plugging in
the expression of Theorem~\ref{CentralRangeTheorem} for $\E[B_{n,k}]$.



\end{document}